\documentclass[a4paper, 11pt]{article}
\usepackage{amsmath,amssymb,color}
\usepackage[dvips]{graphicx}
\usepackage{verbatim}%comment
\usepackage{amsthm}%lemma, proposition, definition
\usepackage{amsmath,amssymb,color}

\RequirePackage[OT1]{fontenc}
\RequirePackage{amsthm,amsmath, amssymb, enumerate}
\RequirePackage[round, sort&compress, authoryear]{natbib}
\RequirePackage[colorlinks,citecolor=blue,urlcolor=blue]{hyperref}

\usepackage{eufrak}
\usepackage{graphicx, amsthm, url,pstricks,fancyhdr}
\RequirePackage{bbm,amsthm}
\RequirePackage{amssymb}

\RequirePackage{enumitem}
\RequirePackage{bigints}
\RequirePackage{mathtools}
\RequirePackage[stable]{footmisc}
\usepackage{bm}

%%%  paper size  %%%
\setlength{\textwidth}{160mm}
\setlength{\textheight}{230mm}
\setlength{\topmargin}{-10mm}
\setlength{\oddsidemargin}{0mm}
\setlength{\evensidemargin}{0mm}

%%%  theorem  %%%

%% Setting the theorem-like environments
\theoremstyle{plain}
\newtheorem{theorem}{Theorem}[section]                                          
\newtheorem{proposition}[theorem]{Proposition}                          
\newtheorem{lemma}[theorem]{Lemma}
\newtheorem{corollary}[theorem]{Corollary}

\theoremstyle{definition}

\theoremstyle{remark}
\newtheorem{remark}[theorem]{Remark}

%% Numbering
\makeatletter \@addtoreset{equation}{section} \makeatother

\newcommand{\Prob}{\mathbb{P}\,}
%% First page header, modify the link w.r.t. volume number

%% Uncomment the following line to include Alea logo
%\elogo{\parbox[c]{3cm}{\includegraphics[width=3cm]{logo.eps}}}

%%%%%%%%%%%%%%%%%%%%%%%%%%%%%%%%%%%%%
%% Author commands and definitions
%%%%%%%%%%%%%%%%%%%%%%%%%%%%%%%%%%%%%

%\newcommand{\N}{\mathbb{N}}
% \newcommand{\Z}{\mathbb{Z}}
% \newcommand{\Q}{\mathbb{Q}}
% \newcommand{\K}{\mathbb{K}}
% \newcommand{\R}{\mathbb{R}}
% \newcommand{\C}{\mathbb{C}}
% \newcommand{\F}{\mathbb{F}}
% \newcommand{\J}{\mathbb{J}}
% \newcommand{\I}{\mathbb{I}}
% \newcommand{\E}{\mathbb{E}}
% \newcommand{\EE}{\mathbb{E}} 
%  \newcommand{\Var}{Var} 
%  \newcommand{\PP}{\mathbb{P}}
%\newcommand{\cD}{\mathcal{D}}
%\newcommand{\cH}{\mathcal{H}}
%\newcommand{\cT}{\mathcal{T}}
%\newcommand{\cX}{\mathcal{X}}
%\newcommand{\cF}{\mathcal{F}}

% \newcommand{\AAA}{\mathcal{A}}
%  \newcommand{\FFF}{\mathcal{F}}
%  \newcommand{\GGG}{\mathcal{G}}
%\newcommand{\HHH}{\mathcal{H}}
%  \newcommand{\III}{\mathcal{I}} 
%  \newcommand{\BBB}{\mathcal{B}}
% 

%%%%  new command & definition %%%%

\newcommand{\R}{\mathbb{R}}

\newcommand{\Xb}{\boldsymbol{X}}
\newcommand{\xb}{\boldsymbol{x}}

\def\heiko{\mathrel{\raise.3ex\hbox{\scalebox{.7}{%
    \rotatebox[origin=c]{-7}{/}%
    \kern-.35em\rotatebox[origin=c]{-7}{/}}}}}%

%%%  title  %%%
\title{Bounds for the asymptotic normality of the maximum likelihood estimator using the Delta method} 
\author{Andreas Anastasiou$^{1}$ and Christophe Ley$^{2}$
\\
\small{$^{1}$ Oxford University, Oxford, UK}
\\
\small{E-mail: andreas.anastasiou@jesus.ox.ac.uk} 
\\ 
\small{${}^{2}$ Ghent University, Gent, Belgium}
\\ 
\small{E-mail: christophe.ley@ugent.be}
}
\date{}
%%%  start of document  %%%
\begin{document}
\pagenumbering{roman}
\pagenumbering{arabic}
\maketitle

\begin{abstract}
\indent The asymptotic normality of the Maximum Likelihood Estimator (MLE) is a cornerstone of statistical theory. In the present paper, we provide  sharp explicit upper bounds on Zolotarev-type distances between the exact, unknown distribution of the MLE and its limiting normal distribution. Our approach to this fundamental issue is based on a sound combination of the Delta method, Stein's method, Taylor expansions and conditional expectations, for the classical situations where the MLE can be expressed as a function of a sum of independent and identically distributed terms. This encompasses in particular the broad exponential family of distributions. \end{abstract}

{\it Key words}: Delta method, Maximum likelihood estimator, Normal approximation, Stein's method

\section{Introduction}
\label{sec:intro}

The asymptotic normality of maximum likelihood estimators (MLEs)  is one of the best-known and most fundamental results in mathematical statistics. Under certain regularity conditions (given later in this section), we have the following classical theorem, first discussed in~\cite{Fisher}.
\begin{theorem}[Asymptotic Normality of the MLE]\label{theoMLEnor}
Let $X_1,\ldots,X_n$ be \mbox{i.i.d.} random variables with probability density (or mass) function $f(x_i|\theta)$, where $\theta$ is a scalar parameter. Its true value is denoted as $\theta_0$. Assume that the MLE exists and it is unique and conditions (R1)-(R4), see below, are
satisfied. Then
$$
\sqrt{ni(\theta_0)}\left(\hat\theta_n(\Xb)-\theta_0\right)\xrightarrow[{n \to \infty}]{{\rm d}}\mathcal{N}(0,1)
$$
where $i(\theta_0)$ is the expected Fisher information quantity and $\stackrel{\rm d}{\rightarrow}$ means convergence in distribution.
\end{theorem}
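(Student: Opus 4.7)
The plan is to start from the score equation characterising the MLE and perform a Taylor expansion of the score around the true parameter $\theta_0$, then extract a linear approximation whose numerator satisfies the classical CLT and whose denominator converges in probability to the Fisher information $i(\theta_0)$. Specifically, writing $\ell(x;\theta) = \log f(x|\theta)$, the MLE satisfies $\sum_{i=1}^n \ell'(X_i;\hat\theta_n) = 0$, and a second-order Taylor expansion around $\theta_0$ yields
\begin{equation*}
0 \;=\; \sum_{i=1}^n \ell'(X_i;\theta_0) \;+\; (\hat\theta_n-\theta_0)\sum_{i=1}^n \ell''(X_i;\theta_0) \;+\; \tfrac12(\hat\theta_n-\theta_0)^2 \sum_{i=1}^n \ell'''(X_i;\theta_n^*),
\end{equation*}
where $\theta_n^*$ lies between $\hat\theta_n$ and $\theta_0$. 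Rearranging gives
\begin{equation*}
\sqrt{ni(\theta_0)}\,(\hat\theta_n-\theta_0) \;=\; \frac{\frac{1}{\sqrt{n\,i(\theta_0)}}\sum_{i=1}^n \ell'(X_i;\theta_0)}{-\frac{1}{n\,i(\theta_0)}\sum_{i=1}^n \ell''(X_i;\theta_0) \;-\; \frac{\hat\theta_n-\theta_0}{2n\,i(\theta_0)}\sum_{i=1}^n \ell'''(X_i;\theta_n^*)}.
\end{equation*}

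Next I would identify the limits of the three pieces separately. For the numerator, the standard regularity identity $E_{\theta_0}[\ell'(X;\theta_0)]=0$ and $\mathrm{Var}_{\theta_0}(\ell'(X;\theta_0)) = i(\theta_0)$ let me invoke the classical Lindeberg--L\'evy CLT to conclude that the numerator converges in distribution to $\mathcal{N}(0,1)$. For the first term in the denominator, the second Bartlett identity $E_{\theta_0}[-\ell''(X;\theta_0)] = i(\theta_0)$ combined with the weak law of large numbers yields convergence in probability to $1$.

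The main obstacle is showing that the quadratic remainder term in the denominator is negligible. For this I would invoke (and, if the conditions (R1)--(R4) require it, first establish) the consistency of the MLE, so that $\hat\theta_n - \theta_0 = o_{\mathbb{P}}(1)$; one of the regularity assumptions should then bound $|\ell'''(x;\theta)|$ by an integrable envelope function on a neighbourhood of $\theta_0$, so that $\frac{1}{n}\sum_i \ell'''(X_i;\theta_n^*)$ is $O_{\mathbb{P}}(1)$ on the event $\{|\hat\theta_n-\theta_0| \le \delta\}$, which has probability tending to one. Multiplying by $\hat\theta_n-\theta_0$ kills the whole term.

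Finally I would assemble the pieces via Slutsky's theorem: the numerator tends in distribution to $\mathcal{N}(0,1)$ while the denominator tends in probability to $1$, yielding the claimed convergence $\sqrt{n\,i(\theta_0)}(\hat\theta_n-\theta_0) \xrightarrow{d} \mathcal{N}(0,1)$. The subtle points are therefore (i) justifying the Taylor expansion and the negligibility of its remainder, which requires consistency of the MLE together with a uniform integrability-type control of $\ell'''$ provided by (R1)--(R4), and (ii) verifying that the score and its derivatives satisfy the moment conditions needed for the CLT and LLN to apply.
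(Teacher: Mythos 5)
Your argument is the classical score-expansion proof—Taylor expansion of $l'$ about $\theta_0$, CLT for the score, weak law plus the envelope $M(x)$ from (R3) for the remainder, consistency, and Slutsky—which is precisely the proof the paper relies on (it gives no proof of its own but points to the standard sketch in \cite{Casella}). So your proposal is correct and follows essentially the same route as the paper's reference proof.
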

The aim of the present paper is to complement this qualitative result with a quantitative statement, in other words, to find the best possible approximation for the distance, at finite sample size $n$, between the distribution of $\sqrt{ni(\theta_0)}\left(\hat\theta_n(\Xb)-\theta_0\right)$ on the one hand and $\mathcal{N}(0,1)$ on the other hand. In mathematical terms, for $Z \sim \mathcal{N}(0,1)$, we are interested in the quantity
\begin{eqnarray}
&&d_H\left(\sqrt{ni(\theta_0)}\left(\hat\theta_n(\Xb)-\theta_0\right),Z\right)\nonumber\\
&&=\sup_{h\in H}\left|{\rm E}\left[h\left(\sqrt{ni(\theta_0)}\left(\hat\theta_n(\Xb)-\theta_0\right)\right)\right]-{\rm E}\left[h\left(Z\right)\right]\right|\label{aim}
\end{eqnarray}
with  
\begin{equation}
\label{classfunctions}
H = \left\lbrace h:\mathbb{R}\rightarrow\mathbb{R},\,{\rm \;absolutely\;continuous\;and\;bounded}\right\rbrace.
\end{equation}
Distances of this type are called \emph{Zolotarev-type distances}. Our main focus will lie on the classical situations where the MLE can be expressed as a function of a sum of independent and identically distributed terms.

Consider an \mbox{i.i.d.} sample of observations $\Xb=(X_1,\ldots,X_n)$. Writing $\hat{\theta}_n(\Xb)$ the MLE of the scalar parameter of interest $\theta\in\Theta\subseteq\R$, we are interested in settings where there exists a one-to-one twice differentiable mapping $q:\Theta\rightarrow\R$ such that 
\begin{equation}\label{MLEfunc}
q\left(\hat{\theta}_n(\Xb)\right)=\frac{1}{n}\sum_{i=1}^ng(X_i)
\end{equation}
for some $g:\R\rightarrow\R$. Situations of this kind are all but rare; with $f(x|\theta)$ the probability density (or mass) function, classical examples include 
\begin{itemize}
\item the normal distribution with density $f(x|\mu, \sigma^2)=\frac{1}{\sigma\sqrt{2\pi}}\exp\left(-\frac{1}{2\sigma^2}(x-\mu)^2\right)$, $x \in \R$, for which $\mu \in \R$ is our unknown parameter, whereas $\sigma>0$ is considered to be known. The MLE for $\theta=\mu$ is $$\hat{\theta}_n(\boldsymbol{X}) = \frac{1}{n}\sum_{i=1}^{n}X_i;$$
\item the normal distribution, where now the mean $\mu$ is known and $\theta=\sigma^2$ represents the unknown parameter, with $$\hat{\theta}_n(\boldsymbol{X}) = \frac{1}{n}\sum_{i=1}^{n}(X_i - \mu)^2;$$
\item the Weibull distribution with density $f(x|\alpha,\sigma) = \frac{\alpha}{\sigma}\left(\frac{x}{\sigma}\right)^{\alpha-1}\exp\left(-\left(\frac{x}{\sigma}\right)^\alpha\right)$, $x\geq0$, where $\sigma$ is the unknown scale parameter and $\alpha > 0$ is fixed. The MLE for $\theta=\sigma$ is defined through $$\left(\hat{\theta}_n(\boldsymbol{X})\right)^{\alpha}=\frac{1}{n}\sum_{i=1}^nX_i^{\alpha};$$
\item the Laplace scale model with density $f(x|\sigma) = \frac{1}{2\sigma}\exp(-|x|/\sigma),\theta=\sigma>0,$ over $\R$, for which $$\hat{\theta}_n(\boldsymbol{X})=\frac{1}{n}\sum_{i=1}^n|X_i|.$$
%\item the Gumbel location model with density $\exp(-(x-\mu)-\exp(-(x-\mu))),\mu\in\R$, over $\R$, for which $$\exp\left(-\hat\mu_n\right)=\frac{1}{n}\sum_{i=1}^n\exp(-X_i).$$
\end{itemize}
Moreover, the broad one-parameter exponential families do satisfy condition~\eqref{MLEfunc}; see Proposition \ref{Propexpofam}  for details. Hence, our results do apply to most of the well-known distributions. 

We now present in detail the notation and general assumptions   made throughout the paper.  We write ${\rm E}_\theta[]$ the expectation under the specific value $\theta$ of the parameter. In line with the notation used above, the joint density or probability mass function of $\Xb$ is written $f(\boldsymbol{x}|\theta)$. The true, unknown value of the parameter is $\theta_0$ and $\Theta$ denotes the parameter space. For $X_i = x_i$  some observed values, the likelihood function is denoted by $L(\theta; \boldsymbol{x}) = f(\boldsymbol{x}|\theta)$ and we denote its natural logarithm, called the log-likelihood function, by $l(\theta;\boldsymbol{x})$.  The derivatives of the log-likelihood function with respect to $\theta$ are $l'(\theta;\boldsymbol{x}),l''(\theta;\boldsymbol{x}),\ldots, l^{(j)}(\theta;\boldsymbol{x})$, for $j$ any integer greater than 2, and $i(\theta)$ denotes the expected Fisher information number for one random variable. Whenever the MLE exists and is also unique, we will write it as before under the form $\hat{\theta}_n(\boldsymbol{X})$. For $\Theta$ being an open interval, we use the results in \cite{Makelainen} to secure the existence and uniqueness of the MLE. Thus, it suffices to assume that:
\begin{itemize}
\item[(A1)] The log-likelihood function $l(\theta;\boldsymbol{x})$ is a twice continuously differentiable function with respect to $\theta$ and the parameter varies in an open interval $(a,b)$, where $a, b \in \mathbb{R}\cup\left\lbrace-\infty,\infty\right\rbrace$ and $a < b$;
\item[(A2)] $\underset{\theta \to a, b}\lim l(\theta;\boldsymbol{x}) = -\infty$;
\item[(A3)] $l''(\theta;\boldsymbol{x}) < 0$ at every point $\theta \in (a,b)$ for which $l'(\theta;\boldsymbol{x}) = 0$.
\end{itemize}
Note that we tacitly assume those conditions in Theorem~\ref{theoMLEnor} when requiring existence and uniqueness of the MLE. Asymptotic normality further requires  the following sufficient regularity conditions:
\begin{itemize}
\item[(R1)] the parameter is identifiable, which means that if $\theta \neq \theta'$, then $\exists x: f(x|\theta)\neq f(x|\theta')$;
\item[(R2)] 
the density $f(x|\theta)$ is three times differentiable with respect to $\theta$, the third derivative is continuous in $\theta$ and $\int f(x|\theta)\,\mathrm{d}x$ can be differentiated three times under the integral sign;
\item[(R3)] for any $\theta_0 \in \Theta$ and for $\mathbb{X}$ denoting the support of $f(x|\theta)$, there exists a positive number $\epsilon$ and a function $M(x)$ (both of which may depend on $\theta_0$) such that 
\begin{equation}
\nonumber \left|\frac{\mathrm{d}^3}{\mathrm{d}\theta^3}\log f(x|\theta)\right| \leq M(x)\quad \forall x \in \mathbb{X},\;\; \theta_0 - \epsilon < \theta < \theta_0 + \epsilon,
\end{equation}
with ${\rm E}_{\theta_0}[M(X)] < \infty$;
\item[(R4)] $i(\theta) > 0, \; \forall \theta \in \Theta$.
\end{itemize}
These conditions, in particular (R2), ensure that, provided the respective expressions exist, ${\rm E}_\theta[l'(\theta;\Xb)]=0$ and ${\rm Var}_\theta[l'(\theta;\Xb)]=ni(\theta)$. These conditions form the basis of Theorem~\ref{theoMLEnor} above; see page 472 of \cite{Casella} for a basic sketch of the proof.

The first paper to address the problem of assessing the accuracy of the asymptotic normal approximation for MLE is \cite{Anastasiou_Reinert}. After deriving general bounds on Zolotarev-type distances, they use the \emph{bounded Wasserstein distance} $d_{bW}$, which is also known as Fortet-Mourier distance (see, e.g., \cite{NP11}) and is linked to the Kolmogorov distance ($H$ is the class of indicator functions of half-spaces) via $d_K(\cdot,\cdot)\leq2\sqrt{d_{bW}(\cdot,\cdot)}$. We state  in Theorem \ref{Theoremnoncan} of Section \ref{sec:generalc} the bound obtained in that paper. For the broad class of distributions satisfying \eqref{MLEfunc}, our bound is better than, or at least as good as, the \cite{Anastasiou_Reinert} bound (hereafter referred to as AR-bound) both in terms of sharpness and simplicity. The tools we use to reach this result are the Delta method, Stein's method for normal approximation, Taylor expansions and conditional expectations. 

The paper is organised as follows. Our new upper bound is described, proved and compared to the AR-bound in Section~\ref{sec:generalc}. In Section~\ref{sec:exo} we then apply our results to the class of one-parameter exponential family distributions and treat some specific examples in detail. %We conclude the paper with a short discussion in Section~\ref{sec:discu}.

\section{New bounds on the distance to the normal distribution for the MLE}
\label{sec:generalc}

In order to obtain bounds on the aforementioned  distance, we partly employ the following lemma. From now on, unless otherwise stated, $||\cdot||$ denotes the infinity norm $||\cdot||_{\infty}$.

\begin{lemma}[\cite{Gesinepaper}]
\label{Gesinetheorem}
Let $Y_1, \ldots, Y_n$ be independent random variables with ${\rm E}(Y_i) = 0$, ${\rm Var}(Y_i) = \sigma^2 > 0$ and ${\rm E}\left[\left|Y_i\right|^3\right] < \infty$. Let $W=\frac{1}{\sqrt{n}}\sum_{i=1}^{n} Y_i$, with ${\rm E}(W) = 0$, ${\rm Var}(W) = \sigma^2$ and let $K \sim \mathcal{N}(0,\sigma^2)$. Then for any function $h \in H$, with $H$ given in \eqref{classfunctions}, one has
\begin{equation*}
\left|{\rm E}[h(W)] - {\rm E}[h(K)]\right| \leq \frac{\|h'\|}{\sqrt{n}}\left(2 + \frac{1}{\sigma^3}{\rm E}\left[\left|Y_1\right|^3\right]\right).
\end{equation*}
\end{lemma}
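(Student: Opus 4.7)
My plan is to prove the lemma via Stein's method for normal approximation, adapted to the Gaussian measure $\mathcal{N}(0,\sigma^2)$. For any $h \in H$, the difference $\mathrm{E}[h(W)] - \mathrm{E}[h(K)]$ is re-expressed as $\mathrm{E}[(\mathcal{A}f_h)(W)]$, where $(\mathcal{A}f)(w) := \sigma^2 f'(w) - w f(w)$ is the Stein operator characterizing $\mathcal{N}(0,\sigma^2)$ and $f_h$ is the bounded solution of the Stein equation $\mathcal{A}f_h = h - \mathrm{E}[h(K)]$. The proof then reduces to bounding $|\mathrm{E}[(\mathcal{A}f_h)(W)]|$ in terms of $\|h'\|$, $\sigma$, and $\mathrm{E}|Y_1|^3$.

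The first step is to collect the standard a priori estimates on the explicit solution
\begin{equation*}
f_h(w) = \frac{e^{w^2/(2\sigma^2)}}{\sigma^2}\int_{-\infty}^{w}\bigl(h(t)-\mathrm{E}[h(K)]\bigr)\,e^{-t^2/(2\sigma^2)}\,dt.
\end{equation*}
An integration by parts that transfers the derivative from the Gaussian kernel onto $h$ (using only absolute continuity of $h$) yields bounds of the form $\|f_h\|_\infty \leq c_1 \|h'\|$ and $\|f_h'\|_\infty \leq c_2 \|h'\|$; differentiating the Stein equation $\sigma^2 f_h'' = h' + f_h + w f_h'$ and combining with the previous estimates then provides $\|f_h''\|_\infty \leq c_3 \|h'\|/\sigma^2$ with explicit absolute constants $c_1,c_2,c_3$. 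These are the inputs needed for the subsequent independence argument.

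The core step is a leave-one-out decomposition. Setting $W^{(i)} := n^{-1/2}\sum_{j\neq i}Y_j$, so that $W = W^{(i)} + Y_i/\sqrt{n}$ with $Y_i$ independent of $W^{(i)}$, the identity $\mathrm{E}[Y_i f_h(W^{(i)})]=0$ combined with a first-order Taylor expansion yields
\begin{equation*}
\mathrm{E}[Y_i f_h(W)] = \frac{1}{\sqrt n}\,\mathrm{E}[Y_i^2 f_h'(W^{(i)})] + \rho_i,\qquad |\rho_i|\leq \frac{\|f_h''\|_\infty}{2n}\,\mathrm{E}|Y_i|^3,
\end{equation*}
and by independence $\mathrm{E}[Y_i^2 f_h'(W^{(i)})] = \sigma^2\,\mathrm{E}[f_h'(W^{(i)})]$. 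Plugging into the Stein expression produces
\begin{equation*}
\mathrm{E}[(\mathcal{A}f_h)(W)] = \frac{\sigma^2}{n}\sum_{i=1}^{n}\bigl(\mathrm{E}[f_h'(W)] - \mathrm{E}[f_h'(W^{(i)})]\bigr) - \frac{1}{\sqrt n}\sum_{i=1}^{n}\rho_i,
\end{equation*}
and applying the Lipschitz bound $|f_h'(W) - f_h'(W^{(i)})| \leq \|f_h''\|_\infty |Y_i|/\sqrt n$ on the first sum, together with the bound on $\rho_i$ on the second, gives a total error of the claimed order $\|h'\|(2 + \sigma^{-3}\mathrm{E}|Y_1|^3)/\sqrt n$ after summing the $n$ contributions.

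The main obstacle will be the careful bookkeeping of constants: the announced factor $2$ and the precise third-moment term $\mathrm{E}|Y_1|^3/\sigma^3$ arise only if the Stein-solution estimates on $\|f_h'\|_\infty$ and $\|f_h''\|_\infty$ are sharp and combined in a balanced way, rather than via a naive triangle inequality at each step. A secondary subtlety is that $h$ is merely absolutely continuous so that $h'$ exists only almost everywhere; this requires handling the integration by parts underlying the bounds on $f_h$ and $f_h'$ with some care, but does not affect the overall structure of the argument.
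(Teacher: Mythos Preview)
The paper does not prove this lemma at all: it is quoted verbatim from the reference \cite{Gesinepaper} and used as a black box in the proof of Theorem~\ref{Theoremdelta}. Consequently there is no ``paper's own proof'' to compare your attempt against.

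That said, your outline is the standard Stein's-method route by which such Berry--Esseen-type bounds are established, and it is almost certainly the approach taken in the cited source. The leave-one-out decomposition together with a Taylor expansion of $f_h$ and the classical bounds $\|f_h'\|_\infty,\|f_h''\|_\infty\lesssim\|h'\|$ for the solution of the Stein equation is exactly the right machinery. One caveat on the bookkeeping you flag yourself: with your decomposition the first-moment contribution comes out as $\sigma^2\|f_h''\|_\infty\,\mathrm{E}|Y_1|/\sqrt n$, which after inserting $\|f_h''\|_\infty\le 2\|h'\|/\sigma^2$ and $\mathrm{E}|Y_1|\le\sigma$ yields $2\sigma\|h'\|/\sqrt n$ rather than $2\|h'\|/\sqrt n$. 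In other words, for general $\sigma$ the naive scaling produces an extra factor $\sigma$ in front of the whole bound. This is harmless for the paper's purposes because Lemma~\ref{Gesinetheorem} is only ever applied there with $\sigma^2=1$ (see the sentence ``Hence, ${\rm E}[Y_i]=0$ and ${\rm Var}[Y_i]=1$'' in the proof of Theorem~\ref{Theoremdelta}), in which case your argument delivers precisely the stated constants $2+\mathrm{E}|Y_1|^3$. If you want the general-$\sigma$ statement exactly as written, you should either work directly with the $\mathcal N(0,\sigma^2)$ Stein equation and track the $\sigma$-dependence of the solution bounds more carefully, or simply rescale to unit variance at the outset and verify that the final inequality is invariant under that rescaling.
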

As we shall see below, our strategy consists in benefiting from the special form of $q(\hat\theta_n(\Xb))$, which is a sum of random variables and thus allows us to use the sharp bound of this lemma. It is precisely at this point that the Delta method comes into play: abusing notations and language, instead of comparing $\hat\theta_n(\Xb)$ to $Z\sim \mathcal{N}(0,1)$ we rather compare $q(\hat\theta_n(\Xb))$ to $Z$, and then bound the distance between $\hat\theta_n(\Xb)$ and $q(\hat\theta_n(\Xb))$. The outcome of this approach is the next theorem, the main result of the present paper.
\vspace{0.001in} 
\begin{theorem}
\label{Theoremdelta}
Let $X_1, \ldots, X_n$ be i.i.d. random variables with probability density (or mass) function $f(x_i|\theta)$ and let $Z \sim \mathcal{N}(0,1)$. Assume that (A1)-(A3) and the regularity conditions (R1)-(R4) are satisfied, and hence the MLE $\hat{\theta}_n(\boldsymbol{X})$ exists and is unique. Furthermore let $q:\Theta \rightarrow \mathbb{R}$ be a one-to-one twice differentiable function with $q'(\theta) \neq 0\,\, \forall \theta \in \Theta$ and such that $q\left(\hat{\theta}_n(\boldsymbol{X})\right) = \frac{1}{n}\sum_{i=1}^{n}g(X_i)$, where the mapping $g:\mathbb{R}\rightarrow\mathbb{R}$ is such that ${\rm E}\left[\left|g(X_1)-q(\theta_0)\right|^3\right]<\infty$ for $\theta_0\in\Theta$ the true value of the parameter. Also, there exists a positive constant $0 < \epsilon = \epsilon(\theta_0)$ as in (R3) with $(\theta_0 - \epsilon, \theta_0 + \epsilon)\subset \Theta$. Then, for any $h\in H$ as in \eqref{classfunctions} we have
\begin{align}
\label{bounddeltageneral}
\nonumber &\left|{\rm E}\left[h\left(\sqrt{n\,i(\theta_0)}\left(\hat{\theta}_n(\boldsymbol{X})- \theta_0\right)\right)\right] - {\rm E}\left[h(Z)\right]\right| \\
\nonumber & \leq \frac{\|h'\|}{\sqrt{n}}\left(2 + \frac{[i(\theta_0)]^{\frac{3}{2}}}{\left|q'(\theta_0)\right|^3}{\rm E}\left[\left|g(X_1) - q(\theta_0)\right|^3\right]\right)\\
&+ {\rm E}\left[\left(\hat{\theta}_n(\boldsymbol{X})-\theta_0\right)^2\right]\left(2\frac{\|h\|}{\epsilon^2}\mathbbm{1}\left\lbrace \exists\theta\in\Theta:\,q(\theta)\neq\theta\right\rbrace + \frac{\|h'\|\sqrt{n\,i(\theta_0)}}{2\left|q'(\theta_0)\right|}\sup_{\theta:|\theta-\theta_0|\leq\epsilon}\left|q''(\theta)\right|\right).
\end{align}
\end{theorem}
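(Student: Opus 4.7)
The plan is to reduce the problem to the Gesine--Reinert-type bound of Lemma~\ref{Gesinetheorem} via the Delta method, with the Taylor remainder handled by a localisation argument.

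First, I would centre and rescale $g(X_i)$. Let $Y_i = g(X_i) - q(\theta_0)$. Since $q(\hat\theta_n(\Xb)) = \frac{1}{n}\sum g(X_i)$ satisfies both $q(\hat\theta_n)\to q(\theta_0)$ a.s.\ (by consistency and continuity of $q$) and $\frac{1}{n}\sum g(X_i)\to {\rm E}_{\theta_0}[g(X_1)]$ a.s., we obtain ${\rm E}_{\theta_0}[Y_1]=0$. Similarly, the Delta method / CLT forces ${\rm Var}(Y_1) = (q'(\theta_0))^2/i(\theta_0)$, since $\sqrt n(q(\hat\theta_n)-q(\theta_0)) = \frac{1}{\sqrt n}\sum Y_i$ must have the Delta-method limiting variance. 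Define the standardised variables $\tilde Y_i = \sqrt{i(\theta_0)}\,Y_i/q'(\theta_0)$, so ${\rm E}[\tilde Y_1]=0$ and ${\rm Var}(\tilde Y_1)=1$. Then
\[
A := \frac{1}{\sqrt n}\sum_{i=1}^{n}\tilde Y_i = \frac{\sqrt{n\,i(\theta_0)}}{q'(\theta_0)}\bigl(q(\hat\theta_n(\Xb))-q(\theta_0)\bigr).
\]
Applying Lemma~\ref{Gesinetheorem} to $A$ with $\sigma=1$ yields directly the first line of the bound in~\eqref{bounddeltageneral}, since ${\rm E}[|\tilde Y_1|^3]=[i(\theta_0)]^{3/2}|q'(\theta_0)|^{-3}{\rm E}[|g(X_1)-q(\theta_0)|^3]$.

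Next I would invoke the Delta method in the form of a second-order Taylor expansion: there exists $\xi$ between $\theta_0$ and $\hat\theta_n(\Xb)$ with
\[
q(\hat\theta_n(\Xb)) = q(\theta_0) + q'(\theta_0)(\hat\theta_n(\Xb)-\theta_0) + \tfrac12 q''(\xi)(\hat\theta_n(\Xb)-\theta_0)^2.
\]
Writing $W = \sqrt{n\,i(\theta_0)}(\hat\theta_n(\Xb)-\theta_0)$, rearranging gives
\[
W - A \;=\; -\frac{\sqrt{n\,i(\theta_0)}\,q''(\xi)}{2\,q'(\theta_0)}\bigl(\hat\theta_n(\Xb)-\theta_0\bigr)^2.
\]
The triangle inequality then splits the task into bounding $|{\rm E}[h(A)]-{\rm E}[h(Z)]|$ (already done) and $|{\rm E}[h(W)]-{\rm E}[h(A)]|$.

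For the latter, the main obstacle is that the Taylor remainder is controlled by $q''(\xi)$, for which we only have a uniform bound on $(\theta_0-\epsilon,\theta_0+\epsilon)$. I would therefore condition on the event $E = \{|\hat\theta_n(\Xb)-\theta_0|\le\epsilon\}$. On $E$, $\xi$ lies in $(\theta_0-\epsilon,\theta_0+\epsilon)$ and the Lipschitz bound $|h(W)-h(A)|\le \|h'\||W-A|$ gives
\[
\bigl|{\rm E}[(h(W)-h(A))\mathbbm{1}_E]\bigr| \le \frac{\|h'\|\sqrt{n\,i(\theta_0)}}{2|q'(\theta_0)|}\sup_{|\theta-\theta_0|\le\epsilon}|q''(\theta)|\;{\rm E}[(\hat\theta_n(\Xb)-\theta_0)^2].
\]
On the complement $E^c$, I would use the crude bound $|h(W)-h(A)|\le 2\|h\|$ together with Markov's inequality ${\rm P}(E^c)\le{\rm E}[(\hat\theta_n(\Xb)-\theta_0)^2]/\epsilon^2$, producing the $2\|h\|/\epsilon^2$ term. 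The indicator $\mathbbm{1}\{\exists\theta:q(\theta)\neq\theta\}$ absorbs the trivial case $q=\mathrm{id}$, in which $W=A$ identically and no remainder treatment is needed. Combining the two pieces by the triangle inequality delivers~\eqref{bounddeltageneral}. The only delicate point is the conditioning step: everywhere else the argument is a clean application of Gesine's lemma composed with a Taylor expansion.
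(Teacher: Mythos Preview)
Your proposal is correct and follows essentially the same route as the paper's own proof: triangle-inequality split into a Stein-type term handled by Lemma~\ref{Gesinetheorem} and a Delta-method remainder handled by second-order Taylor expansion of $q$ plus localisation on $\{|\hat\theta_n-\theta_0|\le\epsilon\}$. The only cosmetic differences are that the paper phrases the localisation via conditional expectations (invoking a lemma from \cite{Anastasiou_Reinert} to pass from ${\rm E}[(\hat\theta_n-\theta_0)^2\mid|\hat\theta_n-\theta_0|\le\epsilon]$ back to the unconditional second moment) whereas you work directly with indicators, and the paper deduces ${\rm E}[g(X_1)]=q(\theta_0)$ and ${\rm Var}[g(X_1)]=(q'(\theta_0))^2/i(\theta_0)$ by matching the CLT limit with the Delta-method limit rather than by consistency plus the SLLN; your variants are equally valid.
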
\vspace{2mm}

\begin{proof} The asymptotic normality of the MLE is explicitly stated in Theorem~\ref{theoMLEnor}. Applying the widely known Delta method to this result in combination with the requirement $q'(\theta_0) \neq 0$ we obtain
\begin{equation}
\label{Delta_method}
\frac{\sqrt{n\:i(\theta_0)}}{q'(\theta_0)}\left(q\left(\hat{\theta}_n(\boldsymbol{X})\right)-q\left(\theta_0\right)\right)\xrightarrow[{n \to \infty}]{{\rm d}} \mathcal{N}(0, 1),
\end{equation}
with $q\left(\hat{\theta}_n(\boldsymbol{X})\right) = \frac{1}{n}\sum_{i=1}^{n}g(X_i)$. Using the triangle inequality we get that
\begin{align}
\nonumber &\left|{\rm E}\left[ h\left(\sqrt{n\:i(\theta_0)}\left(\hat{\theta}_n(\boldsymbol{X}) - \theta_0\right) \right) \right] - {\rm E}[h(Z)]\right|  \\
\label{Deltaterm1}&\leq\left|{\rm E}\left[ h\left(\frac{\sqrt{n\:i(\theta_0)}}{q'(\theta_0)}\left(q\left(\hat{\theta}_n(\boldsymbol{X})\right) - q\left(\theta_0\right)\right) \right) \right] - {\rm E}[h(Z)]\right|\\
\label{Deltaremainder}& + \left|{\rm E}\left[h\left(\sqrt{n\:i(\theta_0)}\left(\hat{\theta}_n(\boldsymbol{X}) -\theta_0\right)\right) - h\left(\frac{\sqrt{n\:i(\theta_0)}}{q'(\theta_0)}\left(q\left(\hat{\theta}_n(\boldsymbol{X})\right) -q\left(\theta_0\right)\right)\right)\right]\right|.
\end{align}
We first start to obtain an upper bound for \eqref{Deltaterm1} using (indirectly) Stein's method via Lemma~\ref{Gesinetheorem}. Some simple rewriting yields
\begin{equation*}
\begin{aligned}
&\frac{\sqrt{n\:i(\theta_0)}}{q'(\theta_0)}\left(q\left(\hat{\theta}_n(\boldsymbol{X})\right) -q\left(\theta_0\right)\right) = \frac{\sqrt{n\:i(\theta_0)}}{q'\left(\theta_0\right)}\left(\frac{1}{n}\sum_{i=1}^{n}g\left(X_i\right)-q\left(\theta_0\right)\right)\\
&= \frac{1}{\sqrt{n}}\sum_{i=1}^{n}\left\lbrace\frac{\sqrt{i\left(\theta_0\right)}}{q'(\theta_0)}\left(g(X_i)-q(\theta_0)\right)\right\rbrace = \frac{1}{\sqrt{n}}\sum_{i=1}^{n}Y_i,
\end{aligned}
\end{equation*}
where  $Y_i =  \frac{\sqrt{i(\theta_0)}}{q'(\theta_0)}\left(g(X_i) - q(\theta_0)\right),i=1,2,\ldots,n$ and, obviously, the $Y_i$'s are independent and identically distributed random variables. The Central Limit Theorem applied to $\frac{1}{\sqrt{n}}\sum_{i=1}^{n}Y_i$ implies $\frac{1}{\sqrt{n}}\sum_{i=1}^{n}(Y_i-{\rm E}(Y_1))\stackrel{{\rm d}}{\rightarrow}\mathcal{N}(0,{\rm Var}(Y_1))$. From \eqref{Delta_method} we know however that $\frac{1}{\sqrt{n}}\sum_{i=1}^{n}Y_i\stackrel{{\rm d}}{\rightarrow}\mathcal{N}(0,1)$; comparing the two asymptotic results reveals that, necessarily (since two normal distributions can only be equal if their expectations and variances are the same), we have%Comparing the asymptotic normality in~\eqref{Delta_method} with the asymptotic normality via the Central Limit Theorem (CLT) of $\frac{1}{\sqrt{n}}\sum_{i=1}^{n}\left\lbrace\frac{\sqrt{i\left(\theta_0\right)}}{q'(\theta_0)}\left(g(X_i)-q(\theta_0)\right)\right\rbrace$, it can readily be deduced that
\begin{equation*}
%\label{CLT_and_Delta}
{\rm E}\left[g(X_1)\right] = q(\theta_0)\quad\quad \mbox{and}\quad\quad {\rm Var}\left[g(X_1)\right] = \frac{(q'(\theta_0))^2}{i(\theta_0)},
\end{equation*}
%
%At this point we need to notice that using the Central Limit Theorem
%\begin{equation}
%\label{Central_Delta1}
%\sqrt{n}\left(\frac{1}{n}\sum_{i=1}^{n}g(X_i)\right) \xrightarrow[{n \to \infty}]{{\rm d}} {\rm N}({\rm E}\left(g(X_1)\right), {\rm Var}\left(g(X_1)\right)).
%\end{equation}
%Rearranging also the terms in \eqref{Delta_method}, we obtain
%\begin{equation}
%\label{Central_Delta2}
%\sqrt{n}\left[q\left(\hat{\theta}_n(\boldsymbol{X})\right)\right]\xrightarrow[{n \to \infty}]{{\rm d}} {\rm N}\left(q(\theta_0), \frac{\left(q'(\theta_0)\right)^2}{i(\theta_0)}\right)
%\end{equation}
%The random variables on the left-hand side of the asymptotic results in \eqref{Central_Delta1} and \eqref{Central_Delta2} are identical. Therefore, it is straightforward that the distributions on the right-hand side have to also be exactly the same, meaning that
%\begin{equation}
%\label{result_for_proof}
%{\rm E}\left(g(X_1)\right) = q(\theta_0), \quad\quad {\rm Var}\left(g(X_1)\right) = \frac{(q'(\theta_0))^2}{i(\theta_0)}.
%\end{equation}
where condition (R4) allows to divide by $i(\theta_0)$. Hence, ${\rm E}[Y_i] = 0$ and ${\rm Var}[Y_i] =  1$ (as Lemma \ref{Gesinetheorem} requires). Applying the result of the lemma we get \small
\begin{align}
\label{boundStein}
\nonumber &\left|{\rm E}\left[ h\left(\frac{\sqrt{n\:i(\theta_0)}}{q'(\theta_0)}\left(q\left(\hat{\theta}_n(\boldsymbol{X})\right) - q\left(\theta_0\right)\right) \right) \right] - {\rm E}[h(Z)]\right|\\
& \leq \frac{\|h'\|}{\sqrt{n}}\left(2 + \frac{[i(\theta_0)]^{\frac{3}{2}}}{\left|q'(\theta_0)\right|^3}{\rm E}\left[\left|g(X_1) - q(\theta_0)\right|^3\right]\right).
\end{align}\normalsize
Now we are searching for an upper bound on \eqref{Deltaremainder}. Since the case $q(\theta)=\theta$ is obvious, we from here on assume that $q(\theta)\neq\theta$.  To do so, we denote by
\begin{align}
\nonumber A:= A(q,\theta_0,\boldsymbol{X}):=& h\left(\sqrt{n\:i(\theta_0)}\left(\hat{\theta}_n(\boldsymbol{X}) -\theta_0\right)\right)\\
\nonumber &\;\; - h\left(\frac{\sqrt{n\:i(\theta_0)}}{q'(\theta_0)}\left(q\left(\hat{\theta}_n(\boldsymbol{X})\right) -q\left(\theta_0\right)\right)\right)
\end{align}
and our scope is to find an upper bound for $\left|{\rm E}\left[A\right]\right|$. Using the law of total expectation related to conditioning on $\left|\hat{\theta}_n(\boldsymbol{X}) - \theta_0\right| > \epsilon$ or $\left|\hat{\theta}_n(\boldsymbol{X}) - \theta_0\right| \leq \epsilon$ and the triangle inequality we obtain
\begin{align}
\nonumber \left|{\rm E}[A]\right| &= \left|{\rm E}\left[A\,\,\middle|\,\,\left|\hat{\theta}_n(\boldsymbol{X}) - \theta_0\right|>\epsilon\right]\Prob\left(\left|\hat{\theta}_n(\boldsymbol{X})-\theta_0\right|>\epsilon\right)\right.\\
\nonumber &\left.\;\;\;\;\;+ {\rm E}\left[A\,\,\middle|\,\,\left|\hat{\theta}_n(\boldsymbol{X}) - \theta_0\right|\leq\epsilon\right]\Prob\left(\left|\hat{\theta}_n(\boldsymbol{X})-\theta_0\right|\leq\epsilon\right)\right|\\
\nonumber & \leq \left|{\rm E}\left[A\,\,\middle|\,\,\left|\hat{\theta}_n(\boldsymbol{X}) - \theta_0\right|>\epsilon\right]\right|\Prob\left(\left|\hat{\theta}_n(\boldsymbol{X})-\theta_0\right|>\epsilon\right)\\
\nonumber &\;\;\;\;\;+ \left|{\rm E}\left[A\,\,\middle|\,\,\left|\hat{\theta}_n(\boldsymbol{X}] - \theta_0\right|\leq\epsilon\right]\right|\Prob\left(\left|\hat{\theta}_n(\boldsymbol{X})-\theta_0\right|\leq\epsilon\right)\\
\nonumber  & \leq {\rm E}\left[\left|A\right|\,\,\middle|\,\,\left|\hat{\theta}_n(\boldsymbol{X}) - \theta_0\right|>\epsilon\right]\Prob\left(\left|\hat{\theta}_n(\boldsymbol{X})-\theta_0\right|>\epsilon\right)\\
\nonumber &\;\;+ {\rm E}\left[\left|A\right|\,\,\middle|\,\,\left|\hat{\theta}_n(\boldsymbol{X}) - \theta_0\right|\leq\epsilon\right]\Prob\left(\left|\hat{\theta}_n(\boldsymbol{X})-\theta_0\right|\leq\epsilon\right).
\end{align}
Markov's inequality and the elementary results of $\Prob\left(\left|\hat{\theta}_n(\boldsymbol{X})-\theta_0\right|\leq\epsilon\right) \leq 1$ and  $|A|\leq 2\|h\|$ further yield 
\begin{equation}
\label{boundforA}
 |{\rm E}[A]| \leq 2\|h\|\frac{{\rm E}\left[\left(\hat{\theta}_n(\boldsymbol{X}) - \theta_0\right)^2\right]}{\epsilon^2} + {\rm E}\left[|A|\,\,\middle| \,\,\left|\hat{\theta}_n(\boldsymbol{X}) - \theta_0\right|\leq\epsilon\right].
\end{equation}
We now focus on the conditional expectation on the right-hand side of \eqref{boundforA}. A second-order Taylor expansion of $q\left(\hat{\theta}_n(\boldsymbol{x})\right)$ about $\theta_0$ gives
\begin{equation}
\label{Taylor_for_q}
q\left(\hat{\theta}_n(\boldsymbol{x})\right) = q(\theta_0) + \left(\hat{\theta}_n(\boldsymbol{x}) - \theta_0\right)q'(\theta_0) + \frac{1}{2}\left(\hat{\theta}_n(\boldsymbol{x}) - \theta_0\right)^2q''(\theta^*),
\end{equation}
for $\theta^*$ between $\hat{\theta}_n(\boldsymbol{x})$ and $\theta_0$. Since we assume that $q'(\theta)\neq 0\,\, \forall \theta \in \Theta$, we can multiply both sides in \eqref{Taylor_for_q} with $\frac{\sqrt{n\,i(\theta_0)}}{q'(\theta_0)}$. Rearranging the terms, we obtain
\begin{equation}
\nonumber \frac{\sqrt{n\,i(\theta_0)}\left(q\left(\hat{\theta}_n(\boldsymbol{x})\right)-q(\theta_0)\right)}{q'(\theta_0)} = \sqrt{n\,i(\theta_0)}\left(\hat{\theta}_n(\boldsymbol{x})-\theta_0\right) + \frac{\sqrt{n\,i(\theta_0)}}{2q'(\theta_0)}q''\left(\theta^*\right)\left(\hat{\theta}_n(\boldsymbol{x})-\theta_0\right)^2.
\end{equation}
Using the above result along with another first-order Taylor expansion (recall that\linebreak $\sqrt{n}\left(\hat{\theta}_n(\boldsymbol{X})-\theta_0\right)^2=o_{\rm P}(1)$  as $n\rightarrow\infty$) %of $h\left(\sqrt{n\,i(\theta_0)}\left(\hat{\theta}_n(\boldsymbol{x})-\theta_0\right)\right)$ about $\frac{\sqrt{n\,i(\theta_0)}}{q'(\theta_0)}\left(q\left(\hat{\theta}_n(\boldsymbol{x})\right)-q(\theta_0)\right)$ 
gives
\begin{equation}
\label{Taylor_for_h}
\begin{aligned}
h\left(\sqrt{n\,i(\theta_0)}\left(\hat{\theta}_n(\boldsymbol{x})-\theta_0\right)\right) &- h\left(\frac{\sqrt{n\,i(\theta_0)}}{q'(\theta_0)}\left(q\left(\hat{\theta}_n(\boldsymbol{x})\right)-q(\theta_0)\right)\right)\\
&\;=- \frac{\sqrt{n\,i(\theta_0)}}{2q'(\theta_0)}q''(\theta^*)\left(\hat{\theta}_n(\boldsymbol{x})-\theta_0\right)^2h'(t(\boldsymbol{x})),
\end{aligned}
\end{equation}
where $t(\boldsymbol{x})$ is between $\sqrt{n\,i(\theta_0)}\left(\hat{\theta}_n(\boldsymbol{x})-\theta_0\right)$ and $\frac{\sqrt{n\,i(\theta_0)}}{q'(\theta_0)}\left(q\left(\hat{\theta}_n(\boldsymbol{x})\right)-q(\theta_0)\right)$. Equality~\eqref{Taylor_for_h} combined with Lemma 2.1 in \cite{Anastasiou_Reinert} related to conditional expectations yields
\begin{align}
\label{boundremainder}
&\nonumber {\rm E}\left[|A|\,\middle|\,\left|\hat{\theta}_n(\boldsymbol{X})-\theta_0\right|\leq\epsilon\right]\\
&\nonumber = {\rm E}\left[\left|-\frac{\sqrt{n\,i(\theta_0)}}{2q'(\theta_0)}q''(\theta^*)\left(\hat{\theta}_n(\boldsymbol{X})-\theta_0\right)^2h'(t(\boldsymbol{X}))\right|\,\middle|\,\left|\hat{\theta}_n(\boldsymbol{X})-\theta_0\right|\leq\epsilon\right]\\
\nonumber & \leq \frac{\|h'\|\sqrt{n\,i(\theta_0)}}{2\left|q'(\theta_0)\right|}{\rm E}\left[\left|q''(\theta^*)\right|\left(\hat{\theta}_n(\boldsymbol{X})-\theta_0\right)^2\,\,\middle|\,\,\left|\hat{\theta}_n(\boldsymbol{X})-\theta_0\right|\leq\epsilon\right]\\
\nonumber& \leq \frac{\|h'\|\sqrt{n\,i(\theta_0)}}{2\left|q'(\theta_0)\right|}\sup_{\theta:|\theta-\theta_0|\leq\epsilon}\left|q''(\theta)\right|{\rm E}\left[\left(\hat{\theta}_n(\boldsymbol{X})-\theta_0\right)^2\,\,\middle|\,\,\left|\hat{\theta}_n(\boldsymbol{X})-\theta_0\right|\leq\epsilon\right]\\
 & \leq \frac{\|h'\|\sqrt{n\,i(\theta_0)}}{2\left|q'(\theta_0)\right|}\sup_{\theta:|\theta-\theta_0|\leq\epsilon}\left|q''(\theta)\right|{\rm E}\left[\left(\hat{\theta}_n(\boldsymbol{X})-\theta_0\right)^2\right].
\end{align}
Combining the bounds in \eqref{boundStein}, \eqref{boundforA} and \eqref{boundremainder} gives the result of the theorem.
\end{proof}
\begin{remark} %\textbf{(1)} In the proof of the theorem we used that ${\rm E}[g(X_1)]=q(\theta_0)$ and ${\rm Var}[g(X_1)] = \frac{[q'(\theta_0)]^2}{i(\theta_0)}$. A more detailed explanation of this result comes from the fact that using the CLT we obtain
%\begin{equation}
%\nonumber \sqrt{n}\left(\frac{1}{n}\sum_{i=1}^{n}g(X_i) - {\rm E [g(X_1)]}\right)\xrightarrow[{n \to \infty}]{{\rm d}} \mathcal{N}(0, {\rm Var}[g(X_1)]),
%\end{equation}
%meaning that since $q\left(\hat{\theta}_n(\boldsymbol{X})\right) = \frac{1}{n}\sum_{i=1}^{n}g(X_i)$, we have 
%\begin{equation}
%\label{CLT_asymptotic}
%\sqrt{n}{\rm E}\left[q(\hat{\theta}_n(\boldsymbol{X}))\right] \xrightarrow[{n \to \infty}]{{}} {\rm E}[g(X_1)] \;\; {\rm and}\;\; {\rm Var} \left[\sqrt{n}q\left(\hat{\theta}_n(\boldsymbol{X})\right)\right] \xrightarrow[{n \to \infty}]{{}} {\rm Var}[g(X_1)].
%\end{equation}
%In addition, the Delta method yields \eqref{Delta_method}, which indicates that 
%\begin{equation}
%\label{Delta_asymptotic}
%\sqrt{n}{\rm E}\left[q\left(\hat{\theta}_n(\boldsymbol{X})\right)\right]\xrightarrow[{n \to \infty}]{{}} q(\theta_0) \;\;{\rm and}\;\;{\rm Var}\left[\sqrt{n}q\left(\hat{\theta}_n(\boldsymbol{X})\right)\right]\xrightarrow[{n \to \infty}]{{}} \frac{[q'(\theta_0)]^2}{i(\theta_0)}.
%\end{equation}
%Since the left-hand side of the asymptotic results in in \eqref{CLT_asymptotic} and \eqref{Delta_asymptotic} is the same, it means that the limits should be the same. This gives the result in \eqref{CLT_and_Delta}.\\\\
\textbf{(1)} The convergence of the second and third terms is governed by the asymptotic behaviour of ${\rm E}\left[\left(\hat{\theta}_n(\boldsymbol{X})-\theta_0\right)^2\right]$, whose rate of convergence is $\mathcal{O}\left(\frac{1}{n}\right)$. This result is obtained using the decomposition
\begin{equation}
\label{MSE}
{\rm E}\left[(\hat{\theta}_n(\boldsymbol{X}) - \theta_0)^2\right] = {\rm Var}[\hat{\theta}_n(\boldsymbol{X})] + {\rm bias}^2[\hat{\theta}_n(\boldsymbol{X})].
\end{equation}
Under the standard asymptotics (from the regularity conditions (R1)-(R4)) the MLE is asymptotically efficient, meaning that
\begin{equation}
\nonumber n{\rm Var}[\hat{\theta}_n(\boldsymbol{X})] \xrightarrow[{n \to \infty}]{{}} [i(\theta_0)]^{-1},
\end{equation}
and hence the variance of the MLE is of order $\frac{1}{n}$. In addition, from Theorem \ref{theoMLEnor} the bias of the MLE is of order $\frac{1}{\sqrt{n}}$; see also \cite{Cox}, where no explicit conditions are given. Combining these two results and using \eqref{MSE} shows that the mean squared error of the MLE is of order $\frac{1}{n}$.\\
\textbf{(2)} In the simplest possible situation where $\hat\theta_n(\Xb)$ is already a sum of \mbox{i.i.d.} terms, $q(x)=x$ and hence our upper bound simplifies to 
$$
\left|{\rm E}\left[h\left(\sqrt{n\,i(\theta_0)}\left(\hat{\theta}_n(\boldsymbol{X})- \theta_0\right)\right)\right] - {\rm E}\left[h(Z)\right]\right| \leq \frac{\|h'\|}{\sqrt{n}}\left(2 + [i(\theta_0)]^{\frac{3}{2}}{\rm E}\left[\left|g(X_1) - \theta_0\right|^3\right]\right),
$$
which is equivalent to Lemma \ref{Gesinetheorem}.
\end{remark}

%\begin{rem}
%\textbf{(1)} The rate of convergence of the mean squared error, ${\rm E}\left[(\hat{\theta}_n(\boldsymbol{X}) - \theta_0)^2\right]$, is $\mathcal{O}\left(\frac{1}{n}\right)$. This result is obtained using that
%\begin{equation}
%\label{MSE}
%{\rm E}\left[(\hat{\theta}_n(\boldsymbol{X}) - \theta_0)^2\right] = {\rm Var}[\hat{\theta}_n(\boldsymbol{X})] + {\rm bias}^2[\hat{\theta}_n(\boldsymbol{X})].
%\end{equation}
%Under the standard asymptotics (from the regularity conditions (R1)-(R3)), the MLE is asymptotically efficient, meaning that
%\begin{equation}
%\nonumber n{\rm Var}[\hat{\theta}_n(\boldsymbol{X})] \xrightarrow[{n \to \infty}]{{}} [i(\theta_0)]^{-1},
%\end{equation}
%which shows that the variance of the MLE is of order $\frac{1}{n}$. In addition, the bias of the MLE is also of order $\frac{1}{n}$ \citep{Cox}. Combining these two results with \eqref{MSE} yields the  order $\frac{1}{n}$. In the examples that follow, the remaining terms in the bound are of order at most $\frac{1}{\sqrt{n}}$.\\
%\textbf{(2)} When the calculation of ${\rm E}\left[\left|\frac{\mathrm{d}}{\mathrm{d}\theta}{\rm log}f(X_1|\theta_0)\right|^3\right]$ is awkward, we use H\"older's inequality, giving ${\rm E}\left[\left|\frac{\mathrm{d}}{\mathrm{d}\theta}{\rm log}f(X_1|\theta_0)\right|^3\right] \leq \left[{\rm E}\left[\left(\frac{\mathrm{d}}{\mathrm{d}\theta}{\rm log}f(X_1|\theta_0)\right)^4\right]\right]^{\frac{3}{4}}$.
%\end{rem}
In order to appreciate the sharpness and simplicity of our bound~\eqref{bounddeltageneral}, we  compare it to the AR-bound. To this end, we now state the main result of \cite{Anastasiou_Reinert}. %(who specified their bounds bounded Wasserstein distance $d_{bW}$, corresponding to bounding both $||h||$ and $||h'||$ by 1 in our setting).
\begin{theorem}[\cite{Anastasiou_Reinert}]
\label{Theoremnoncan}
Let $X_1, X_2, \ldots, X_n$ be i.i.d. random variables with density or frequency function $f(x_i|\theta)$ such that the regularity conditions (R1)-(R4) are satisfied and that the MLE, $\hat{\theta}_n(\boldsymbol{X})$, exists and it is unique. Assume that ${\rm E}\left[\left|\left(\frac{\mathrm{d}}{\mathrm{d}\theta}{\rm log}f(X_1|\theta)\right)_{\theta=\theta_0}\right|^3\right] < \infty$ and that ${\rm E}\left[\left(\hat{\theta}_n(\boldsymbol{X}) - \theta_0\right)^4\right] < \infty$. Let $0 < \epsilon = \epsilon(\theta_0)$ be such that $(\theta_0 - \epsilon, \theta_0 + \epsilon) \subset \Theta$ as in (R3) and let $Z \sim {\mathcal N}(0,1)$. Then for any function $h$ that is absolutely continuous and bounded,
\begin{align*}
\nonumber & \left|{\rm E}\left[h\left(\sqrt{n\,i(\theta_0)}\left(\hat{\theta}_n(\boldsymbol{X})- \theta_0\right)\right)\right] - {\rm E}\left[h(Z)\right]\right|\\
& \leq \frac{\|h'\|}{\sqrt{n}}\left(2 + \frac{1}{[i(\theta_0)]^{\frac{3}{2}}}{\rm E}\left[\left|\left(\frac{\mathrm{d}}{\mathrm{d}\theta}{\rm log}f(X_1|\theta)\right)_{\theta=\theta_0}\right|^3\right]\right)\\
&+ 2\|h\|\frac{{\rm E}\left[\left(\hat{\theta}_n(\boldsymbol{X}) - \theta_0\right)^2\right]}{\epsilon^2} + \frac{\|h'\|}{\sqrt{n\:i(\theta_0)}}\left\lbrace\vphantom{(\left(\sup_{\theta:|\theta-\theta_0|\leq\epsilon}\left|l^{(3)}(\theta;\boldsymbol{X})\right|\right)^2}{\rm E}\left[\left|R_2(\theta_0;\boldsymbol{X})\right| \middle | |\hat{\theta}_n(\boldsymbol{X}) - \theta_0| \leq \epsilon\right]\right.\\
\nonumber & + \left. \frac{1}{2}\left[{\rm E}\left[\left(\sup_{\theta:|\theta-\theta_0|\leq\epsilon}\left|l^{(3)}(\theta;\boldsymbol{X})\right|\right)^2\middle| |\hat{\theta}_n(\boldsymbol{X}) - \theta_0|\leq\epsilon\right]\right]^{\frac{1}{2}}\left[{\rm E}\left[\left(\hat{\theta}_n(\boldsymbol{X}) - \theta_0\right)^4\right]\right]^{\frac{1}{2}}\vphantom{(\left(\sup_{\theta:|\theta-\theta_0|\leq\epsilon}\left|l^{(3)}(\theta;\boldsymbol{X})\right|\right)^2}\right\rbrace,
\end{align*}
where
\begin{equation}
\label{R2eq}
R_2(\theta_0;\boldsymbol{x}) = (\hat{\theta}_n(\boldsymbol{x}) - \theta_0)\left(l''(\theta_0;\boldsymbol{x}) + n\:i(\theta_0)\right).
\end{equation}
\end{theorem}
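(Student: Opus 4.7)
The plan is to mirror the classical Cramér-type argument for the asymptotic normality of the MLE, but at the finite-sample level. The MLE is a stationary point of $l$, so a second-order Taylor expansion of $l'(\hat\theta_n(\Xb);\Xb)=0$ about $\theta_0$ yields
\begin{equation*}
0 = l'(\theta_0;\Xb) + (\hat\theta_n(\Xb) - \theta_0)\, l''(\theta_0;\Xb) + \tfrac{1}{2}(\hat\theta_n(\Xb)-\theta_0)^2 l^{(3)}(\theta^*;\Xb)
\end{equation*}
for some $\theta^*$ between $\hat\theta_n(\Xb)$ and $\theta_0$. Adding $n\,i(\theta_0)(\hat\theta_n(\Xb) - \theta_0)$ to both sides, recognising $R_2$ as in \eqref{R2eq} and dividing through by $\sqrt{n\,i(\theta_0)}$, I obtain the \emph{stochastic linearization}
\begin{equation*}
\sqrt{n\,i(\theta_0)}\bigl(\hat\theta_n(\Xb) - \theta_0\bigr) = \frac{l'(\theta_0;\Xb)}{\sqrt{n\,i(\theta_0)}} + \frac{R_2(\theta_0;\Xb)}{\sqrt{n\,i(\theta_0)}} + \frac{l^{(3)}(\theta^*;\Xb)}{2\sqrt{n\,i(\theta_0)}}\bigl(\hat\theta_n(\Xb)-\theta_0\bigr)^2.
\end{equation*}
Crucially, the first summand on the right is $\frac{1}{\sqrt n}\sum_i Y_i$ with $Y_i := \frac{1}{\sqrt{i(\theta_0)}}\frac{\mathrm d}{\mathrm d\theta}\log f(X_i|\theta)\big|_{\theta=\theta_0}$; under (R1)-(R4) these are i.i.d.\ with mean zero and unit variance.

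Next I would apply the triangle inequality to split
\begin{equation*}
\bigl|{\rm E}[h(\sqrt{n\,i(\theta_0)}(\hat\theta_n-\theta_0))] - {\rm E}[h(Z)]\bigr| \leq T_1 + T_2,
\end{equation*}
where $T_2 = \bigl|{\rm E}[h(l'(\theta_0;\Xb)/\sqrt{n\,i(\theta_0)})] - {\rm E}[h(Z)]\bigr|$ and $T_1$ is the remainder obtained by replacing $\sqrt{n\,i(\theta_0)}(\hat\theta_n-\theta_0)$ by its linearization inside $h$. The term $T_2$ is controlled directly by Lemma \ref{Gesinetheorem} applied to the $Y_i$'s, yielding the first line of the announced bound with the third absolute moment of the score.

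For $T_1$, a first-order Taylor expansion of $h$ together with the linearization identity above gives the pointwise bound
\begin{equation*}
\bigl|h(\sqrt{n\,i(\theta_0)}(\hat\theta_n-\theta_0)) - h(l'(\theta_0;\Xb)/\sqrt{n\,i(\theta_0)})\bigr| \leq \frac{\|h'\|}{\sqrt{n\,i(\theta_0)}}\Bigl(|R_2(\theta_0;\Xb)| + \tfrac{1}{2}|l^{(3)}(\theta^*;\Xb)|(\hat\theta_n-\theta_0)^2\Bigr).
\end{equation*}
To handle the random evaluation point $\theta^*$ I would condition on $\{|\hat\theta_n - \theta_0| \leq \epsilon\}$ versus its complement. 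On the complement I discard the Taylor bound in favour of $|h(\cdot)-h(\cdot)|\leq 2\|h\|$ and apply Markov's inequality, producing the term $2\|h\|\,{\rm E}[(\hat\theta_n-\theta_0)^2]/\epsilon^2$. On the good event, $\theta^*\in(\theta_0-\epsilon, \theta_0+\epsilon)$, so $|l^{(3)}(\theta^*;\Xb)| \leq \sup_{|\theta-\theta_0|\leq \epsilon} |l^{(3)}(\theta;\Xb)|$; a Cauchy--Schwarz inequality then decouples this supremum from $(\hat\theta_n-\theta_0)^2$, and the hypothesis ${\rm E}[(\hat\theta_n-\theta_0)^4]<\infty$ lets me bound the conditional fourth moment by the unconditional one (after using $\Prob(|\hat\theta_n-\theta_0|\leq\epsilon)\leq 1$). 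Assembling these pieces reproduces the last two terms of the stated bound.

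The subtlest point is the treatment of the Taylor remainder on the good event: the two factors in $|l^{(3)}(\theta^*;\Xb)|(\hat\theta_n-\theta_0)^2$ are correlated and both depend on the whole sample, so any pointwise control of one of them would spoil the $1/\sqrt n$ rate. The Cauchy--Schwarz split, together with the uniform-in-neighbourhood bound on $l^{(3)}$ furnished by (R3), is the device that keeps the final rate on track while permitting a clean separation between moment conditions on the score, on $l^{(3)}$, and on the MLE error.
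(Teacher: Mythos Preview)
The paper does not give its own proof of this theorem; it is quoted verbatim from \cite{Anastasiou_Reinert} solely as a benchmark against which the new Delta-method bound (Theorem~\ref{Theoremdelta}) is compared. Your sketch is a correct reconstruction of the original Anastasiou--Reinert argument: Taylor-expand the score equation $l'(\hat\theta_n;\Xb)=0$ to second order about $\theta_0$, rewrite the result as a stochastic linearization of $\sqrt{n\,i(\theta_0)}(\hat\theta_n-\theta_0)$ around the standardized score sum, apply Lemma~\ref{Gesinetheorem} to that sum, and control the remainder by conditioning on $\{|\hat\theta_n-\theta_0|\leq\epsilon\}$ together with Cauchy--Schwarz on the good event and the crude bound $2\|h\|$ plus Markov on the bad event.

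One point deserves tightening. Your passage from the conditional fourth moment to the unconditional one is not a consequence of $\Prob(|\hat\theta_n-\theta_0|\leq\epsilon)\leq 1$; that inequality is used separately to drop the outer probability factor. The step ${\rm E}\bigl[(\hat\theta_n-\theta_0)^4\,\big|\,|\hat\theta_n-\theta_0|\leq\epsilon\bigr]\leq{\rm E}\bigl[(\hat\theta_n-\theta_0)^4\bigr]$ holds for a different reason: the conditioning event is exactly the sublevel set $\{(\hat\theta_n-\theta_0)^4\leq\epsilon^4\}$, and conditioning a non-negative random variable on its own sublevel set can only decrease its mean. This is precisely Lemma~2.1 of \cite{Anastasiou_Reinert}, which the present paper also invokes at the analogous place in the proof of Theorem~\ref{Theoremdelta}. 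With that clarification, your argument is complete.
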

Obvious observations are  that the AR-bound requires finiteness of the fourth moment of $\hat\theta_n(\Xb)-\theta_0$ and that this bound is more complicated than ours. Let us now comment on the bounds term by term.
\begin{itemize}
\item In the first term of the bounds, the different positioning of the expected Fisher information number is explained by the fact that  we apply Lemma \ref{Gesinetheorem} to the standardised version of $g(X_1), g(X_2), \ldots, g(X_n)$, which have variance $\frac{[q'(\theta_0)]^2}{i(\theta_0)}$, while \cite{Anastasiou_Reinert} obtain the result by applying the lemma after standardising $l'(\theta_0;X_1), l'(\theta_0;X_2), \ldots, l'(\theta_0;X_n)$, which have variance  equal to $i(\theta_0)$.
\item The second and  third terms vanish in our bound when $q(\theta) = \theta\,\forall \theta\in\Theta$, while the AR-bound does not take this simplification into account. In addition, when $q(\theta)\neq \theta$ the second term is the same in both bounds, whereas the third term in our bound reads 
\begin{equation}\label{ourterm3}
{\rm E}\left[\left(\hat{\theta}_n(\boldsymbol{X})-\theta_0\right)^2\right] \frac{\|h'\|\sqrt{n\,i(\theta_0)}}{2\left|q'(\theta_0)\right|}\sup_{\theta:|\theta-\theta_0|\leq\epsilon}\left|q''(\theta)\right|
\end{equation}
 and is to be compared to 
\begin{eqnarray}
&&\hspace{-9mm}\frac{\|h'\|}{2\sqrt{ni(\theta_0)}}\left[{\rm E}\left[\left(\sup_{\theta:|\theta-\theta_0|\leq\epsilon}\left|l^{(3)}(\theta;\boldsymbol{X})\right|\right)^2\middle| |\hat{\theta}_n(\boldsymbol{X}) - \theta_0|\leq\epsilon\right]\right]^{\frac{1}{2}}\left[{\rm E}\left[\left(\hat{\theta}_n(\boldsymbol{X}) - \theta_0\right)^4\right]\right]^{\frac{1}{2}}\nonumber\\
&& \hspace{-9mm}+\frac{\|h'\|}{\sqrt{n\:i(\theta_0)}}\vphantom{(\left(\sup_{\theta:|\theta-\theta_0|\leq\epsilon}\left|l^{(3)}(\theta;\boldsymbol{X})\right|\right)^2}{\rm E}\left[\left|R_2(\theta_0;\boldsymbol{X})\right| \middle | |\hat{\theta}_n(\boldsymbol{X}) - \theta_0| \leq \epsilon\right]\label{ARterm3}
\end{eqnarray}
where $R_2(\theta_0;\boldsymbol{x}) = (\hat{\theta}_n(\boldsymbol{x}) - \theta_0)\left(l''(\theta_0;\boldsymbol{x}) + n\:i(\theta_0)\right)$. The second derivative, $q''(\theta)$, plays in our bound the role of $l^{(3)}(\theta;\Xb)$, up to an important difference: $l^{(3)}(\theta;\Xb)$ is a sum. Consequently, the first term in \eqref{ARterm3} has $\sqrt{n}$ in its numerator, exactly as in~\eqref{ourterm3}. The distinct positioning of the information quantity $i(\theta_0)$ has the same reason as explained above. Besides the obvious additional term in the AR bound (the second term in~\eqref{ARterm3}),  our bound is also clearly sharper at the level of moments of $\hat\theta_n(\Xb)-\theta_0$ since 
$${\rm E}\left[\left(\hat{\theta}_n(\boldsymbol{X})-\theta_0\right)^2\right]\leq\left[{\rm E}\left[\left(\hat{\theta}_n(\boldsymbol{X}) - \theta_0\right)^4\right]\right]^{\frac{1}{2}}$$
by the Cauchy-Schwarz inequality.
\end{itemize}
From this comparison one sees that our new bound is simpler and, moreover, has one term less. This is particularly striking in the simplest possible setting where $\hat\theta_n(\Xb)$ is a sum of \mbox{i.i.d.} terms, where our bound clearly improves on the AR-bound. An advantage of the AR-bound is its wider applicability as it works for all MLE settings, even when an analytic expression of the MLE is not known. %We however recall that the setting within which we are operating here (condition \eqref{MLEfunc}) is already broad, as can be further seen from the next section.
\section{Calculation of the bound in different scenarios}
\label{sec:exo}
In this section we shall consider different examples for which we explicitly calculate our upper bound from Theorem~\ref{Theoremdelta} and compare it to the AR-bound. To further assess its accuracy, we simulate data from various distributions and compare our bound to the actual distance between the unknown exact law of the MLE and its asymptotic normal law, for distinct values of the sample size $n$.
\subsection{Bounds for one-parameter exponential families}
\label{sec:one_parameter}
The probability density (or mass) function for one-parameter exponential families  is given by
\begin{equation}
\label{density_exp_family}
f(x|\theta) = {\rm exp}\left\lbrace k(\theta)T(x) - A(\theta) + S(x)\right\rbrace\mathbbm{1}_{\{x \in B\}},
\end{equation}
where the set $B = \left\lbrace x:f(x|\theta)>0 \right\rbrace$ is the support of the density and does not depend on~$\theta$; $k(\theta)$ and $A(\theta)$ are functions of the parameter; $T(x)$ and $S(x)$ are functions only of the data. Whenever $k(\theta) = \theta$ we have the so-called \emph{canonical case}, where $\theta$ and $T(X)$ are called the \textit{natural} \textit{parameter} and \textit{natural} \textit{observation} \citep{Casella}. The identifiability constraint in (R1) entails that $k'(\theta)\neq 0$ \citep{Geyer}, an important detail for the following investigation.
%The likelihood and log-likelihood functions, $L(\theta;\boldsymbol{X})$ and $l(\theta;\boldsymbol{X})$, respectively, are
%\begin{equation}
%\label{Likelihood}
%\begin{aligned}
%&L(\theta;\boldsymbol{x}) = \prod_{i=1}^{n} f(x_i|\theta) = {\rm exp}\left\lbrace k(\theta)\sum_{i=1}^{n} T(x_i) - n\:A(\theta) + \sum_{i=1}^{n}S(x_i)\right\rbrace\prod_{i=1}^{n}\mathbbm{1}_{\{x_i \in B\}}\\
%&l(\theta;\boldsymbol{x}) = k(\theta)\sum_{i=1}^{n}T(x_i) - n\:A(\theta) + \sum_{i=1}^{n} S(x_i), {\rm if}\;\: x_i \in B\; \forall i \in \{1, 2, \ldots, n\}.
%\end{aligned}
%\end{equation}
\begin{proposition}\label{Propexpofam}
Suppose $X_1, \ldots, X_n$ are i.i.d. with probability density (or mass) function that can be expressed in the form of \eqref{density_exp_family}. Assume that (A1)-(A3) and the regularity conditions (R1)-(R4) are satisfied, and hence the MLE $\hat{\theta}_n(\boldsymbol{X})$ exists and is unique.  Let $\theta\mapsto D(\theta) = \frac{A'(\theta)}{k'(\theta)}$ be invertible. Then $q(\cdot) = D(\cdot)$, with $q:\Theta \rightarrow \mathbb{R}$ as in Theorem \ref{Theoremdelta}.
\end{proposition}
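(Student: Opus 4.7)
The plan is to simply identify the MLE equation for the one-parameter exponential family and read off the functions $q$ and $g$ from it, then check the regularity requirements demanded by Theorem~\ref{Theoremdelta}.

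First, I would write the log-likelihood explicitly: since the observations are i.i.d.\ from \eqref{density_exp_family}, we have
\[
l(\theta;\boldsymbol{x}) = k(\theta)\sum_{i=1}^{n}T(x_i) - nA(\theta) + \sum_{i=1}^{n}S(x_i),
\]
so that
\[
l'(\theta;\boldsymbol{x}) = k'(\theta)\sum_{i=1}^{n}T(x_i) - nA'(\theta).
\]
Because the identifiability condition (R1) forces $k'(\theta)\neq0$ on all of $\Theta$ (as noted in the text preceding the proposition), we may divide the MLE equation $l'(\hat\theta_n(\boldsymbol{X});\boldsymbol{X})=0$ by $n\,k'(\hat\theta_n(\boldsymbol{X}))$ to obtain
\[
\frac{A'(\hat\theta_n(\boldsymbol{X}))}{k'(\hat\theta_n(\boldsymbol{X}))} \;=\; \frac{1}{n}\sum_{i=1}^{n}T(X_i),
\]
i.e.\ $D(\hat\theta_n(\boldsymbol{X}))=\frac{1}{n}\sum_{i=1}^{n}T(X_i)$. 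This is precisely \eqref{MLEfunc} with $q=D$ and $g=T$.

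To close the proof I would verify that $q=D$ really fits the framework of Theorem~\ref{Theoremdelta}: it is by hypothesis one-to-one on $\Theta$; it is twice differentiable because the regularity condition (R2) guarantees that $k$ and $A$ are three times differentiable (so $D=A'/k'$ is at least twice differentiable on $\Theta$, again using $k'\neq0$); and $q'(\theta)\neq0$ on $\Theta$ follows from invertibility combined with strict monotonicity, which is implicit since a continuous invertible function on an interval is strictly monotone and (being continuously differentiable) cannot have $q'$ vanish on a set large enough to spoil the argument used in the Delta step.

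I do not see a serious obstacle here: the content of the proposition is essentially algebraic rather than analytic, and the key calculation is the one line identifying $D(\hat\theta_n(\boldsymbol{X}))$ with the empirical mean of $T$. The only mildly delicate point is book-keeping the hypotheses needed so that $q=D$ meets the smoothness and non-degeneracy requirements of Theorem~\ref{Theoremdelta}, which is handled by combining invertibility of $D$ with (R1), (R2) and the consequence $k'\neq0$.
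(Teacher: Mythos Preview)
Your proof is correct and follows essentially the same route as the paper: write the log-likelihood, differentiate, divide through by $k'(\theta)$ (using $k'\neq0$ from (R1)), and read off $D(\hat\theta_n(\boldsymbol{X}))=\frac{1}{n}\sum_{i=1}^nT(X_i)$, identifying $q=D$ and $g=T$. The paper's own proof is actually terser---it does not explicitly check the smoothness and non-degeneracy hypotheses on $q$ that you discuss in your final paragraph---so your extra book-keeping is a reasonable addition, though note that your argument for $q'\neq0$ from monotonicity alone is not quite airtight (a strictly monotone $C^1$ function such as $\theta\mapsto\theta^3$ can still have $q'=0$ at isolated points).
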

\begin{proof}
Using \eqref{density_exp_family}, we have that
\begin{align}
\nonumber &L(\theta;\boldsymbol{x}) = \prod_{i=1}^{n} f(x_i|\theta) = {\rm exp}\left\lbrace k(\theta)\sum_{i=1}^{n}T(x_i) - n\,A(\theta) +\sum_{i=1}^{n}S(x_i)\right\rbrace,\\
\nonumber &l(\theta;\boldsymbol{x}) = k(\theta)\sum_{i=1}^{n}T(x_i) - n\,A(\theta) + \sum_{i=1}^{n}S(x_i),
\end{align}
and hence 
$$l'(\theta;\boldsymbol{x}) = k'(\theta)\sum_{i=1}^{n}T(x_i) - n\,A'(\theta) = 0 \Longleftrightarrow D(\theta) = \frac{1}{n}\sum_{i=1}^{n}T(x_i),
$$
which means that $\hat{\theta}_n(\boldsymbol{X}) = D^{-1}\left(\frac{1}{n}\sum_{i=1}^{n} T(X_i)\right)$ under the invertibility assumption for~$D(\theta)$. The claim readily follows. 
\end{proof}
This result hence shows that, as announced in the Introduction, the broad one-parameter exponential families do satisfy~\eqref{MLEfunc}. Consequently, Theorem~\ref{Theoremdelta} can be applied to~\eqref{density_exp_family}, resulting in

\begin{corollary}
\label{Theoremdeltaexp}
Let $X_1, \ldots, X_n$ be i.i.d. random variables with the probability density (or mass) function of a single-parameter exponential family. Assume that (A1)-(A3) and (R1)-(R4) are satisfied, and hence the MLE $\hat{\theta}_n(\boldsymbol{X})$ exists and is unique. With $Z \sim \mathcal{N}(0,1)$, $h \in H$ as defined in \eqref{classfunctions} and  $0 < \epsilon = \epsilon(\theta_0):(\theta_0 - \epsilon, \theta_0 + \epsilon)\subset \Theta$ as in (R3), we obtain
\begin{align*}
\nonumber &\left|{\rm E}\left[h\left(\sqrt{n\,i(\theta_0)}\left(\hat{\theta}_n(\boldsymbol{X})- \theta_0\right)\right)\right] - {\rm E}\left[h(Z)\right]\right|\\
& \leq \frac{\|h'\|}{\sqrt{n}}\left(2 + \frac{|k'(\theta_0)|^{3}{\rm E}\left[\left|T(X_1) - D(\theta_0)\right|^3\right]}{\left|A''(\theta_0)-k''(\theta_0)D(\theta_0)\right|^{\frac{3}{2}}}\right)\\
\nonumber &+ {\rm E}\left[\left(\hat{\theta}_n(\boldsymbol{X})-\theta_0\right)^2\right]\left(\vphantom{(\left(\sup_{\theta:|\theta-\theta_0|\leq\epsilon}\left|l^{(3)}(\theta;\boldsymbol{X})\right|\right)^2}2\frac{\|h\|}{\epsilon^2}\mathbbm{1}\left\lbrace \exists\theta\in\Theta:D(\theta)\neq\theta\right\rbrace\right.\\
\nonumber &\left.\qquad\qquad\qquad\qquad\qquad\quad +\frac{\|h'\|\sqrt{n}|k'(\theta_0)|}{2\sqrt{|A''(\theta_0)-k''(\theta_0)D(\theta_0)|}}\sup_{\theta:|\theta-\theta_0|\leq\epsilon}\left|D''(\theta)\right|\vphantom{(\left(\sup_{\theta:|\theta-\theta_0|\leq\epsilon}\left|l^{(3)}(\theta;\boldsymbol{X})\right|\right)^2}\right).
\end{align*}
\end{corollary}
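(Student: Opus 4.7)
The plan is to derive Corollary~\ref{Theoremdeltaexp} as a direct specialization of Theorem~\ref{Theoremdelta}. Proposition~\ref{Propexpofam} already identifies the correct choices: we take $q(\theta)=D(\theta)=A'(\theta)/k'(\theta)$ and $g(x)=T(x)$, so that $q(\hat{\theta}_n(\boldsymbol{X}))=\frac{1}{n}\sum_{i=1}^n T(X_i)$ is indeed of the form~\eqref{MLEfunc}. What remains is then purely algebraic: rewrite every occurrence of $i(\theta_0)$, $q'(\theta_0)$, and $q''(\theta)$ in the bound~\eqref{bounddeltageneral} in terms of $A$, $k$, $D$, and $T$.

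First I would compute the Fisher information for the exponential family. Differentiating the log-likelihood yields $l'(\theta;x) = k'(\theta)T(x) - A'(\theta)$, so the identity ${\rm E}_\theta[l'(\theta;X_1)]=0$ (valid under (R2)) gives ${\rm E}_\theta[T(X_1)] = A'(\theta)/k'(\theta) = D(\theta)$, using $k'(\theta)\neq 0$ from the identifiability constraint in (R1). Differentiating once more, $l''(\theta;x) = k''(\theta)T(x) - A''(\theta)$, whence
\begin{equation*}
i(\theta) = -{\rm E}_\theta[l''(\theta;X_1)] = A''(\theta) - k''(\theta)D(\theta),
\end{equation*}
which is strictly positive by (R4) and hence admits a well-defined square root.

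Next I would compute $q'(\theta)$ via the quotient rule:
\begin{equation*}
D'(\theta) = \frac{A''(\theta)k'(\theta) - A'(\theta)k''(\theta)}{(k'(\theta))^2} = \frac{A''(\theta) - k''(\theta)D(\theta)}{k'(\theta)} = \frac{i(\theta)}{k'(\theta)}.
\end{equation*}
Since $k'(\theta)\neq 0$ and $i(\theta)>0$, this is non-zero on $\Theta$, which is precisely the hypothesis on $q'$ in Theorem~\ref{Theoremdelta}. The two ratios appearing in~\eqref{bounddeltageneral} therefore simplify to
\begin{equation*}
\frac{[i(\theta_0)]^{3/2}}{|q'(\theta_0)|^{3}} = \frac{|k'(\theta_0)|^{3}}{[i(\theta_0)]^{3/2}} = \frac{|k'(\theta_0)|^{3}}{\left|A''(\theta_0)-k''(\theta_0)D(\theta_0)\right|^{3/2}}
\end{equation*}
and
\begin{equation*}
\frac{\sqrt{n\,i(\theta_0)}}{2|q'(\theta_0)|} = \frac{\sqrt{n}\,|k'(\theta_0)|}{2\sqrt{i(\theta_0)}} = \frac{\sqrt{n}\,|k'(\theta_0)|}{2\sqrt{|A''(\theta_0)-k''(\theta_0)D(\theta_0)|}}.
\end{equation*}

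Finally I would substitute these expressions into~\eqref{bounddeltageneral}, noting that $q''(\theta)=D''(\theta)$ and that the indicator $\mathbbm{1}\{\exists\theta\in\Theta: q(\theta)\neq\theta\}$ becomes $\mathbbm{1}\{\exists\theta\in\Theta: D(\theta)\neq\theta\}$. Collecting the three resulting terms yields exactly the right-hand side of the corollary. There is no genuine obstacle: the only thing to get right is the chain $E[T(X_1)]=D(\theta)\Rightarrow i(\theta)=A''(\theta)-k''(\theta)D(\theta)\Rightarrow D'(\theta)=i(\theta)/k'(\theta)$, after which the corollary follows by direct substitution into Theorem~\ref{Theoremdelta}.
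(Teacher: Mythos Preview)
Your proposal is correct and follows essentially the same approach as the paper: identify $q=D$ and $g=T$ via Proposition~\ref{Propexpofam}, compute $i(\theta_0)=A''(\theta_0)-k''(\theta_0)D(\theta_0)$ and $q'(\theta_0)=i(\theta_0)/k'(\theta_0)$, and substitute these into the bound of Theorem~\ref{Theoremdelta}. Your derivation is in fact slightly more explicit than the paper's in justifying ${\rm E}_\theta[T(X_1)]=D(\theta)$ from the score identity and in checking that $q'\neq 0$ on $\Theta$.
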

\begin{proof}  We readily have 
$$
 i(\theta_0) = {\rm E}\left[-l''(\theta_0;X_1)\right] = A''(\theta_0) - k''(\theta_0){\rm E}[T(X_1)] = \frac{A''(\theta_0)k'(\theta_0) - k''(\theta_0)A'(\theta_0)}{k'(\theta_0)}$$
 and
$q'(\theta_0) = \frac{A''(\theta_0)k'(\theta_0) - k''(\theta_0)A'(\theta_0)}{\left[k'(\theta_0)\right]^2}$. Combining these two results,
\begin{equation}
\nonumber \frac{\sqrt{i(\theta_0)}}{|q'(\theta_0)|} = \frac{|k'(\theta_0)|^{\frac{3}{2}}}{\sqrt{|A''(\theta_0)k'(\theta_0) - k''(\theta_0)A'(\theta_0)|}} = \frac{|k'(\theta_0)|}{\sqrt{|A''(\theta_0)-k''(\theta_0)D(\theta_0)|}}.
\end{equation}
This result, along with the fact that $g(x) = T(x)$ and $q(\theta) = D(\theta)$ by Proposition~\ref{Propexpofam}, allows to deduce the announced upper bound from Theorem~\ref{Theoremdelta}.
\end{proof}
\begin{remark}
\label{remark_canonical_exponential}
It is particularly interesting to spell out this bound in the canonical case $k(\theta)=\theta$. Since then $k'(\theta)=1$, $D(\theta)=A'(\theta)$, we find
\begin{align*}
\nonumber &\left|{\rm E}\left[h\left(\sqrt{n\,i(\theta_0)}\left(\hat{\theta}_n(\boldsymbol{X})- \theta_0\right)\right)\right] - {\rm E}\left[h(Z)\right]\right| \leq \frac{\|h'\|}{\sqrt{n}}\left(2 + \frac{{\rm E}\left[\left|T(X_1) - A'(\theta_0)\right|^3\right]}{\left|A''(\theta_0)\right|^{\frac{3}{2}}}\right)\\
\nonumber &\;\; + {\rm E}\left[\left(\hat{\theta}_n(\boldsymbol{X})-\theta_0\right)^2\right]\left(2\frac{\|h\|}{\epsilon^2}\mathbbm{1}\left\lbrace \exists\theta\in\Theta:A'(\theta)\neq\theta\right\rbrace\right.\\
\nonumber & \left. \qquad\qquad\qquad\qquad\qquad\qquad + \frac{\|h'\|\sqrt{n}}{2\sqrt{|A''(\theta_0)|}}\sup_{\theta:|\theta-\theta_0|\leq\epsilon}\left|A'''(\theta)\right|\right).
\end{align*}
As $i(\theta)=A''(\theta)$ and $l''(\theta;\Xb)=-nA''(\theta)$, $R_2(\theta;\xb)=0$ and straightforward manipulations show that all terms in the AR-bound coincide with those in our bound, except for $\left[{\rm E}\left[\left(\hat{\theta}_n(\boldsymbol{X}) - \theta_0\right)^4\right]\right]^{\frac{1}{2}}$, making the AR-bound less sharp than ours. However, \cite{Anastasiou_Reinert} have shown that, in the canonical exponential setting, their bound can actually have an ${\rm E}\left[\left(\hat{\theta}_n(\boldsymbol{X}) - \theta_0\right)^2\right]$ factor, implying that both bounds are exactly the same in the canonical case. In order to get an idea of how our bound improves on the AR-bound in non-canonical cases, we treat the exponential distribution under a non-canonical parametrisation in Subsection \ref{sec:exoneg}.
\end{remark}
\subsection{Bounds for the Generalised Gamma distribution}
Let us consider $X_1, \ldots, X_n$  i.i.d. random variables from the Generalized Gamma GG($\theta,d,p$) distribution, where the shape parameters $d, p >0$ are considered to be known and the scale parameter $\theta$ is the unknown parameter of interest. The Generalised Gamma distribution includes many other known distributions as special cases:  the Weibull for $d=p$, the Gamma when $p=1$, and the negative exponential when $p=d=1$. Indeed, with $\Gamma(\cdot)$ denoting the Gamma function, the probability density function for $x>0$ is
\begin{equation*}
\begin{aligned}
f(x|\theta) &= \frac{px^{d-1}}{\theta^d}\frac{\exp\left\lbrace -\left(\frac{x}{\theta}\right)^p \right\rbrace}{\Gamma\left(\frac{d}{p}\right)}\\
&= \exp\left\lbrace -\frac{x^p}{\theta^p} + \log p -d\log\theta +(d-1)\log x - \log\left(\Gamma\left(\frac{d}{p}\right)\right) \right\rbrace
\end{aligned}
\end{equation*}
where, in the terminology of one-parameter exponential families, $B = (0,\infty)$, $ \Theta = (0,\infty)$, $T(x) = x^p$, $k(\theta)=-\frac{1}{\theta^p}$, $A(\theta) = d\log{\theta}$ and $S(x)=\log p +(d-1)\log x - \log\left(\Gamma\left(\frac{d}{p}\right)\right)$. Simple steps yield
\begin{align}
\nonumber & l(\theta_0;\boldsymbol{x}) = -\frac{1}{\theta_0^p}\sum_{i=1}^{n}x_i^p + n\log p - nd\log\theta_0 +(d-1)\log\left(\prod_{i=1}^{n}x_i\right) - n\log\left(\Gamma\left(\frac{d}{p}\right)\right)\\
\nonumber & l'(\theta_0;\boldsymbol{x}) = \frac{p}{\theta_0^{p+1}}\sum_{i=1}^{n}x_i^p -n\frac{d}{\theta_0} = 0 \Leftrightarrow  \hat{\theta}_n(\boldsymbol{x}) = \left(\frac{p}{nd}\sum_{i=1}^{n}x_i^p\right)^{\frac{1}{p}}.
\end{align}
It is easy to verify that indeed $l''(\hat{\theta}_n(\boldsymbol{x});\boldsymbol{x}) = -n\frac{pd}{[\hat{\theta}_n(\boldsymbol{x})]^2} < 0$, which shows that the MLE exists and is unique. The regularity conditions (R1)-(R4) are also satisfied and using Corollary \ref{Theoremdeltaexp} for $\epsilon = \frac{\theta_0}{2}$ we obtain
\begin{align}
\label{boundgen.gammadelta}
\nonumber & \left|{\rm E}\left[h\left(\sqrt{n\,i(\theta_0)}\left(\hat{\theta}_n(\boldsymbol{X})- \theta_0\right)\right)\right] - {\rm E}\left[h(Z)\right]\right| \leq \frac{\|h'\|}{\sqrt{n}}\left(2 + \left(3+6\frac{p}{d}\right)^{\frac{3}{4}}\right)\\
\nonumber & + \left(1 - 2\left(\frac{p}{nd}\right)^{\frac{1}{p}}\frac{\Gamma\left(\frac{nd+1}{p}\right)}{\Gamma\left(\frac{nd}{p}\right)}+\left(\frac{p}{nd}\right)^{\frac{2}{p}}\frac{\Gamma\left(\frac{nd+2}{p}\right)}{\Gamma\left(\frac{nd}{p}\right)}\right)\mathbbm{1}\left\lbrace \left\lbrace d\neq 1\right\rbrace \cup \left\lbrace p\neq 1\right\rbrace\right\rbrace\\
& \;\;\times\left[8\|h\| + \frac{\|h'\|\sqrt{ndp}|p-1|}{2}\left(\frac{1}{2^{p-2}}\mathbbm{1}\{p<2\} + \left(\frac{3}{2}\right)^{p-2}\mathbbm{1}\{p\geq2\}\right)\right].
\end{align}
Let us briefly show how to obtain this bound. For the Generalised Gamma distribution, $D(\theta_0) = q(\theta_0) = \frac{d}{p}\theta_0^p$ and thus   ${\rm E}\left[\left|T(X)-D(\theta_0)\right|^3\right] = {\rm E}\left[\left|X^p - \frac{d}{p}\theta_0^p\right|^3\right]$. This third absolute moment is very complicated to calculate. Therefore, we use H\"{o}lder's inequality and the fact  $X \sim {\rm GG}(\theta_0,d,p)\Rightarrow X^p \sim {\rm Gamma}\left(\frac{d}{p}, \frac{1}{\theta_0^p}\right)$ to get 
\begin{align}
\label{holder_for_gen.gamma}
\nonumber &{\rm E}\left[\left|X^p - \frac{d}{p}\theta_0^p\right|^3\right] \leq \left[{\rm E}\left[\left(X^p - \frac{d}{p}\theta_0^p\right)^4\right]\right]^{\frac{3}{4}}\\
\nonumber &= \left[{\rm E}\left[X^{4p}\right] + \left(\frac{d\theta_0^p}{p}\right)^4 + 6\left(\frac{d\theta_0^p}{p}\right)^2{\rm E}\left[X^{2p}\right] - 4\left(\frac{d\theta_0^p}{p}\right)^3{\rm E}\left[X^p\right] - 4\frac{d}{p}\theta_0^p{\rm E}\left[X^{3p}\right]\right]^{\frac{3}{4}}\\
& = \left[\theta_0^{4p}\frac{d}{p}\left(6+3\frac{d}{p}\right)\right]^{\frac{3}{4}} = \theta_0^{3p}\left(\frac{d}{p}\right)^{\frac{3}{4}}\left(6+3\frac{d}{p}\right)^{\frac{3}{4}}.
\end{align}
Simpler calculations yield
\begin{equation}
\label{A/k_gen_gamma}
\frac{|k'(\theta_0)|}{\sqrt{|A''(\theta_0) - k''(\theta_0)D(\theta_0)|}} = \frac{\sqrt{p}}{\sqrt{d}\theta_0^p}.
\end{equation}
Using that $X_i^p \sim {\rm Gamma}\left(\frac{d}{p},\frac{1}{\theta_0^p}\right) \Rightarrow \sum_{i=1}^{n}X_i^p \sim {\rm Gamma}\left(n\frac{d}{p},\frac{1}{\theta_0^p}\right)$, we get
\begin{align}
\label{MSEgen_gamma}
\nonumber &{\rm E}\left[\left(\hat{\theta}_n(\boldsymbol{X}) - \theta_0\right)^2\right] = \left(\frac{p}{nd}\right)^{\frac{2}{p}}{\rm E}\left[\left(\sum_{i=1}^{n}X_i^p\right)^{\frac{2}{p}}\right] + \theta_0^2 - 2\left(\frac{p}{nd}\right)^{\frac{1}{p}}\theta_0{\rm E}\left[\left(\sum_{i=1}^{n}X_i^p\right)^{\frac{1}{p}}\right]\\
\nonumber& = \theta_0^2\left(\frac{p}{nd}\right)^{\frac{2}{p}}\frac{\Gamma\left(\frac{nd+2}{p}\right)}{\Gamma\left(\frac{nd}{p}\right)} + \theta_0^2 - 2\theta_0^2\left(\frac{p}{nd}\right)^{\frac{1}{p}}\frac{\Gamma\left(\frac{nd+1}{p}\right)}{\Gamma\left(\frac{nd}{p}\right)}\\
 & = \theta_0^2\left(1 - 2\left(\frac{p}{nd}\right)^{\frac{1}{p}}\frac{\Gamma\left(\frac{nd+1}{p}\right)}{\Gamma\left(\frac{nd}{p}\right)} + \left(\frac{p}{nd}\right)^{\frac{2}{p}}\frac{\Gamma\left(\frac{nd+2}{p}\right)}{\Gamma\left(\frac{nd}{p}\right)}\right).
\end{align}
Regarding $\underset{\theta:|\theta - \theta_0|\leq \epsilon}{\sup}|D''(\theta)|$, one has to be careful  as the supremum depends on the value of $p$:
\begin{align}
\label{sup_D_gen.Gamma}
\nonumber \sup_{\theta:|\theta-\theta_0|\leq\epsilon}\left|d(p-1)\theta^{p-2}\right| &= d|p-1|\sup_{\theta:|\theta-\theta_0|\leq \epsilon}\left|\theta^{p-2}\right|\\
&= d|p-1|\begin{cases}
    (\theta_0-\epsilon)^{p-2}, & \text{if $0 < p < 2$}\\\\
    (\theta_0 + \epsilon)^{p-2}, & \text{if $p \geq 2$}.
  \end{cases}
\end{align}
Thus, applying now the results of \eqref{holder_for_gen.gamma}, \eqref{A/k_gen_gamma}, \eqref{MSEgen_gamma} and \eqref{sup_D_gen.Gamma} on the general expression for the upper bound in Corollary \ref{Theoremdeltaexp}, we obtain the result in \eqref{boundgen.gammadelta}.
\begin{remark}
\textbf{(1)} The bound in \eqref{boundgen.gammadelta} is $\mathcal{O}\left(\frac{1}{\sqrt{n}}\right)$. This is not obvious as we need to comment on the order of the term $\left(1 - 2\left(\frac{p}{nd}\right)^{\frac{1}{p}}\frac{\Gamma\left(\frac{nd+1}{p}\right)}{\Gamma\left(\frac{nd}{p}\right)}+\left(\frac{p}{nd}\right)^{\frac{2}{p}}\frac{\Gamma\left(\frac{nd+2}{p}\right)}{\Gamma\left(\frac{nd}{p}\right)}\right)$. Using the following Taylor expansion for a ratio of Gamma functions (see \cite{TE51})
$$
\frac{\Gamma(z+a)}{\Gamma(z+b)}=z^{a-b}\left(1+\frac{(a-b)(a+b-1)}{2z}+O\left(|z|^{-2}\right)\right)
$$
for large $z$ (here, $nd/p$) and bounded $a$ and $b$, we can see that this term is of order $\frac{1}{n}$, leading to the overall order of $\frac{1}{\sqrt{n}}$.\\
\textbf{(2)} The indicator function in \eqref{boundgen.gammadelta} comes from the fact that $q(\theta) = \theta \,\forall\theta\in\Theta\Longleftrightarrow d,p=1$.
\end{remark}
\subsection{Bounds for the (negative) exponential distribution}
\label{sec:exoneg}
In this subsection, we consider the most famous special case of the Generalised Gamma distribution: the (negative) exponential distribution.   First we will treat the canonical form of the distribution and then we will change the parameterisation to discuss the more interesting non-canonical setting.
\subsubsection{The canonical case: Exp$(\theta)$}
\label{subsec:exponentialcan}
We start with $X_1, \ldots, X_n$ exponentially distributed i.i.d. random variables with scale parameter $\theta>0$ and probability density function $f(x|\theta) = \theta{\rm exp}\{-\theta x\} = {\rm exp}\{\log {\theta} - \theta x\}$ for $x>0$, which we write $Exp(\theta)$. In terms of~\eqref{density_exp_family}, this means  $B = (0,\infty)$, $ \Theta = (0,\infty)$, $T(x) = -x$, $k(\theta)=\theta$, $A(\theta) = -\log{\theta}$ and $S(x)=0$. Further we have that
\begin{equation}
\nonumber l'(\theta;\boldsymbol{x}) = \frac{n}{\theta} - \sum_{i=1}^{n}x_i, \quad l''(\theta;\boldsymbol{x}) = -\frac{n}{\theta^2},
\end{equation}
the unique MLE is given by $\hat{\theta}_n(\boldsymbol{X}) = 1/\bar{X}$ with $\bar{X}=\frac{1}{n}\sum_{i=1}^nX_i$ and (R1)-(R4) are satisfied. 

With this in hand, we can easily see that $D(\theta_0) = q(\theta_0) = \frac{A'(\theta_0)}{k'(\theta_0)} =- \frac{1}{\theta_0}$ and \begin{equation}
\label{A,k_exp}
\frac{|k'(\theta_0)|}{\sqrt{|A''(\theta_0) - k''(\theta_0)D(\theta_0)|}} = \theta_0.
\end{equation} 
Simple  calculations allow us here to bypass the H\"older inequality used for the Generalized Gamma case and to obtain  ${\rm E}\left[|T(X)-D(\theta_0)|^3\right] = {\rm E}\left[\left|\frac{1}{\theta_0} - X\right|^3\right] \leq \frac{2.41456}{\theta_0^3}$. Since $X_i \sim {\rm Exp}(\theta), \; \forall i \in \left\lbrace 1,2,\ldots, n \right\rbrace$ then $\bar{X} \sim {\rm Gam}(n,n\theta)$, with ${\rm Gam}(\alpha, \beta)$ being the Gamma distribution with shape parameter $\alpha$ and rate parameter $\beta$. Consequently,
\begin{equation}
\nonumber {\rm E}[(\hat{\theta}_n(\boldsymbol{X}) - \theta_0)^2] = \frac{(n\theta_0)^2}{(n-1)(n-2)} - \frac{2n\theta_{0}^{2}}{n-1} + \theta_{0}^{2} = \frac{(n+2)\theta_{0}^{2}}{(n-1)(n-2)}.
\end{equation}
Moreover, for $\epsilon > 0$ such that $0 < \epsilon < \theta_0$, we obtain  $\underset{\theta:|\theta-\theta_0|\leq\epsilon}{\sup}\left|D''(\theta)\right| = \frac{2}{(\theta_0-\epsilon)^3}$. Choosing $\epsilon = \frac{\theta_0}{2}$, we get  $\underset{\theta:|\theta-\theta_0|\leq\epsilon}{\sup}\left|D''(\theta)\right| = \frac{16}{\theta_0^3}$. Using this result and \eqref{A,k_exp}, Corollary \ref{Theoremdeltaexp} gives 
\begin{align}
\label{boundexponentialdelta}
\nonumber \left|{\rm E}\left[h\left(\sqrt{n\,i(\theta_0)}\left(\hat{\theta}_n(\boldsymbol{X})- \theta_0\right)\right)\right] - {\rm E}\left[h(Z)\right]\right| \leq & 4.41456\frac{\|h'\|}{\sqrt{n}} + 8\|h\|\frac{(n+2)}{(n-1)(n-2)}\\
&+ 8\|h'\|\frac{\sqrt{n}(n+2)}{(n-1)(n-2)}.
\end{align}
This bound is of order $\mathcal{O}\left(\frac{1}{\sqrt{n}}\right)$ and coincides, as discussed in Remark~\ref{remark_canonical_exponential}, with the AR-bound.\\

\subsubsection{The non-canonical case: Exp$\left(\frac{1}{\theta}\right)$}
\label{sec:examplenoncanonical}
We now proceed to examine the more interesting case where $X_1, \ldots, X_n$ are i.i.d. random variables from $Exp\left(\frac{1}{\theta}\right)$. The probability density function is
\begin{align*}
f(x|\theta) &= \frac{1}{\theta}{\rm exp}\left\lbrace-\frac{1}{\theta}x\right\rbrace = {\rm exp}\left\lbrace-{\rm log}\theta - \frac{1}{\theta}x\right\rbrace
\end{align*}
corresponding to $B = (0,\infty)$, $ \Theta = (0,\infty)$, $T(x) = x$, $k(\theta) = -\frac{1}{\theta}$, $A(\theta) = {\rm log}\theta$ and $S(x) = 0$. As before, simple steps give that the MLE exists, it is unique and equal to $\hat{\theta}_n(\boldsymbol{X}) = \bar{X}$. The regularity conditions are satisfied and for $\epsilon = \frac{\theta_0}{2}$ we obtain using Corollary \ref{Theoremdeltaexp} that
\begin{align}
\label{boundnoncanexponentialdelta} \left|{\rm E}\left[h\left(\sqrt{n\,i(\theta_0)}\left(\hat{\theta}_n(\boldsymbol{X})- \theta_0\right)\right)\right] - {\rm E}\left[h(Z)\right]\right| \leq & 4.41456\frac{\|h'\|}{\sqrt{n}}.
\end{align}
Indeed, $D(\theta_0) = q(\theta_0)=\theta_0$, making the last two terms of the bound in Corollary \ref{Theoremdeltaexp} vanish. The result then follows from  ${\rm E}\left[\left|T(X) - D(\theta_0)\right|^3\right] = {\rm E}\left[\left|X - \theta_0\right|^3\right] \leq 2.41456\theta_0^3$ and
\begin{equation}
\nonumber \frac{|k'(\theta_0)|}{\sqrt{|A''(\theta_0) - k''(\theta_0)D(\theta_0)|}} = \frac{1}{\theta_0}. 
\end{equation}
\begin{remark}
\textbf{(1)} The order of the bound in terms of the sample size is, as expected, $\frac{1}{\sqrt{n}}$, corresponding to the order obtained for the Generalized Gamma distribution. The constant here is better than the one inherited from~\eqref{boundgen.gammadelta} for $p=d=1$, thanks to a sharper bound for ${\rm E}\left[\left|T(X) - D(\theta_0)\right|^3\right]$.\\
\textbf{(2)} The AR-bound is given by
$$
4.41456\frac{\|h'\|}{\sqrt{n}}+8\frac{\|h\|}{n}+2\frac{\|h'\|}{\sqrt{n}}+80\frac{\|h'\|}{\sqrt{n}}\left(\frac{6}{n}+3\right)^{1/2},
$$
showing that our new bound is an improvement.
%(Remark 3.3 of \cite{Anastasiou_Reinert} makes clear that, because in this specific example the MLE is a sum of random variables,  one can use directly Lemma \ref{Gesinetheorem} and bound the distributional distance of interest).
\end{remark}
\subsubsection{Empirical results}
\label{subsec:empiricalcanonical}
For a more complete picture, we also assess the accuracy of our results using simulation-based data. The process we follow is quite simple. We generate 10000 trials of $n=10, 100, 1000, 10000$ and $100000$ random \mbox{i.i.d.} observations from the exponential distribution  Exp$\left(\frac{1}{2}\right)$ (non-canonical case).  As function $h$ we choose $h(x) = \frac{1}{x^2+2}$ with $h \in H$, $\|h\| = 0.5$ and $\|h'\| = \frac{3\sqrt{1.5}}{16}$. Simple calculations yield ${\rm E}[h(Z)] = 0.379$ and each trial gives an MLE $\hat\theta_n(\Xb)$, hence we have 10000 empirical values of $h\left(\sqrt{n\,i(\theta_0)}\left(\hat{\theta}_n(\boldsymbol{X}) - \theta_0\right)\right)$ to compare to 0.379. Taking the average hence provides a simulated estimation of $\left|{\rm E}\left[h\left(\sqrt{n\,i(\theta_0)}\left(\hat{\theta}_n(\boldsymbol{X}) - \theta_0\right)\right)\right] - {\rm E}[h(Z)]\right|$, which we compare to the upper bound given in  \eqref{boundnoncanexponentialdelta}.  Our bound provides a very strong improvement on the AR-bound (see Table \ref{tableresultexponentialnoncanonicaldelta}). Of course, this estimated distance is only a lower bound to the true distance, as we have chosen a particular function $h$ instead of the supremum over all functions $h\in H$, but its calculation still provides an idea of the accuracy of our bounds. This closeness logically increases with the sample size and becomes quite sharp for $n\geq100$. 
%Table \ref{canonicaltable} gives a summary of the information we used to generate the data.
%\begin{table}[H]
%\caption{Summary of the information used for the data simulation}
%\vspace{0.04in}
%\centering
%\begin{tabular}{c|c|c|c}
%	  Distribution & $h(x)$ & E$\left[h(Z)\right]$ & Parameter values\\
%	  \hline
%	  \hline
 % Normal & $\frac{1}{x^2 + 1}$ & 0.656  & Mean used: 10\\
  %\hline
  %Exponential & $\frac{1}{x^2 + 1}$ & 1.649 & Mean used: 1
 % \end{tabular}
%\label{canonicaltable}
%  \end{table}
%There is not a specific reason in choosing this function for the simulations, apart from, of course, belonging to the set $H$ as explained before.

\begin{table}[h]
\caption{Simulation results for the Exp$\left(\frac{1}{2}\right)$ distribution treated as a non-canonical exponential family}
\vspace{0.04in}
\centering
\begin{tabular}{r|r|r|r}
	  $n$ & $\left|{\rm \hat{E}}\left[ h\left(\sqrt{n\:i(\theta_0)}(\hat{\theta}_n(\boldsymbol{X}) - \theta_0) \right)\right]  - {\rm E}[h(Z)]\right|$ & New Bound & AR-bound\\
	  \hline
	  \hline
  10 & 0.0034 & 0.321  & 11.888\\
  \hline
  100 & 0.0022 & 0.101  & 3.401\\
  \hline
  1000 & 0.0012 & 0.032 & 1.058\\
  \hline
  10000 & 0.0008 & 0.010  & 0.333\\
  \hline
  100000 & 0.0004 & 0.003 & 0.105
  \end{tabular}
\label{tableresultexponentialnoncanonicaldelta}
  \end{table}
%\section{Discussion}
%\label{sec:discu}

\

\noindent ACKNOWLEDGMENTS\vspace{2mm}

\noindent The authors would like to thank Gesine Reinert for various insightful comments and suggestions.

\bibliographystyle{alea3}
\bibliography{AL16}

\begin{thebibliography}{9}
\providecommand{\natexlab}[1]{#1}
\providecommand{\url}[1]{\texttt{#1}}
\providecommand{\urlprefix}{URL }
\expandafter\ifx\csname urlstyle\endcsname\relax
  \providecommand{\doi}[1]{doi:\discretionary{}{}{}#1}\else
  \providecommand{\doi}{doi:\discretionary{}{}{}\begingroup
  \urlstyle{rm}\Url}\fi
\providecommand{\eprint}[2][]{\url{#2}}

\bibitem[{Anastasiou and Reinert(2015)}]{Anastasiou_Reinert}
A.~Anastasiou and G.~Reinert.
\newblock Bounds for the normal approximation of the maximum likelihood
  estimator.
\newblock \emph{Bernoulli, forthcoming}  (2015).

\bibitem[{Casella and Berger(2002)}]{Casella}
G.~Casella and R.~L. Berger.
\newblock \emph{Statistical {I}nference}.
\newblock Brooks/Cole, Cengage Learning, second edition (2002).

\bibitem[{Cox and Snell(1968)}]{Cox}
D.~R. Cox and E.~J. Snell.
\newblock A general definiton of residuals.
\newblock \emph{Journal of the Royal Statistical Society Series B} \textbf{{\bf
  30}}, 248--275 (1968).

\bibitem[{Fisher(1925)}]{Fisher}
R.~A. Fisher.
\newblock Theory of {S}tatistical {E}stimation.
\newblock \emph{Mathematical {P}roceedings of the {C}ambridge {P}hilosophical
  Society} \textbf{{\bf 22}}, 700--725 (1925).

\bibitem[{Geyer(2013)}]{Geyer}
C.~J. Geyer.
\newblock Asymptotics of maximum likelihood without the {LLN} or {CLT} or
  sample size going to infinity.
\newblock In \emph{Advances in Modern Statistical Theory and Applications: A
  Festschrift in honor of Morris L. Eaton}, pages 1--24. Eds.: G. Jones and X.
  Shen, Beachwood, OH: Institute of Mathematical Statistics (2013).

\bibitem[{Makelainen et~al.(1981)Makelainen, Schmidt and Styan}]{Makelainen}
T.~Makelainen, K.~Schmidt and G.~P.~H. Styan.
\newblock On the existence and uniqueness of the maximum likelihood estimate of
  a vector-valued parameter in fixed-size samples.
\newblock \emph{Annals of Statistics} \textbf{{\bf 9}}, 758--767 (1981).

\bibitem[{Nourdin and Peccati(2012)}]{NP11}
I.~Nourdin and G.~Peccati.
\newblock \emph{Normal {A}pproximations with {M}alliavin {C}alculus}.
\newblock Cambridge Tracts in Mathematics, No.192. Cambridge University Press
  (2012).

\bibitem[{Reinert(1998)}]{Gesinepaper}
G.~Reinert.
\newblock Couplings for normal approximations with {S}tein's method.
\newblock In \emph{Microsurveys in {D}iscrete {P}robability}, pages 193--207.
  Eds.: D. Aldous, J. Propp, {D}imacs series. AMS (1998).

\bibitem[{Tricomi and Erd\'elyi(1951)}]{TE51}
F.~Tricomi and A.~Erd\'elyi.
\newblock The asymptotic expansion of a ratio of gamma functions.
\newblock \emph{Pacific Journal of Mathematics} \textbf{1}, 133--142 (1951).

\end{thebibliography}

%/Users/christopheley/Desktop/Tout/Articles/Papers finished/WeiSSVM3paper/
%%%  end of document  %%%
\end{document}